\documentclass[journal,twoside,web]{ieeecolor}
\usepackage{generic}
\usepackage{cite}
\usepackage{amsmath,amssymb,amsfonts}
\usepackage{algorithmic}
\usepackage{graphicx}
\usepackage{algorithm,algorithmic}
\usepackage{hyperref}
\hypersetup{hidelinks=true}
\usepackage{textcomp}
\usepackage{tikz}
\usetikzlibrary{arrows.meta,positioning}
\usepackage{etoolbox}
\hypersetup{hypertexnames=false}

\newtheorem{theorem}{Theorem}
\newtheorem{lemma}{Lemma}
\newtheorem{corollary}{Corollary}
\newtheorem{proposition}{Proposition}

\newtheorem{assumption}{Assumption}
\newtheorem{remark}{Remark}
\def\BibTeX{{\rm B\kern-.05em{\sc i\kern-.025em b}\kern-.08em
    T\kern-.1667em\lower.7ex\hbox{E}\kern-.125emX}}
\begin{document}
\title{A Backstepping Framework for Unconstrained Accelerated Optimization Algorithms}
\author{Song Chen, Jiaxu Liu, and Chao Xu, \IEEEmembership{Senior Member, IEEE}
\thanks{This work was supported by  the Key Research and Development Project of China National Tobacco Corporation  under Grant 110202402018 and the National Natural Science Foundation of China (No. 62573382, No. 12171431, No. 62373323, No. 72342025). \emph{(Corresponding author: Chao Xu, Jiaxu Liu.)}}
\thanks{Song Chen is with the Department of Mathematics, National University of Singapore, Singapore, Singapore (e-mail: song.chen@nus.edu.sg). }
\thanks{Jiaxu Liu is with the School of Mathematical Sciences, Zhejiang University, Hangzhou, Zhejiang 310027, China. (e-mail: jiaxuliu@zju.edu.cn)}
\thanks{Chao Xu is with the State Key Laboratory of Industrial Control Technology, the Institute of Cyber-Systems and Control, College of Control Science and Engineering, Zhejiang University, Hangzhou, Zhejiang 310027, China, and also with Huzhou Institute of Zhejiang University, Huzhou, Zhejiang 313000, China.  (e-mail: cxu@zju.edu.cn).}}

\maketitle

\begin{abstract}
This paper introduces a control-theoretic perspective on unconstrained optimization algorithms using the backstepping methods. We model the optimization process as an augmented strict-feedback system given by 
$\dot{x}_1 = x_2$, $\dot{x}_2 = u$, and $\dot{z} = q(x_1,z)$, 
with a regulated output $y = \nabla f(x_1)$. This formulation recasts the development of unconstrained optimization algorithms as a feedback control problem, where the goal is to design the input $u$ to ensure $y(t) \to 0$. By employing backstepping, we recursively synthesize the actual feedback law $u$ after initially selecting a virtual control for $x_1$. For convex objective functions, we develop a general synthesis framework for augmented strict-feedback systems and specialize it to the standard strict-feedback case. This unified framework successfully recovers the constant-parameter Nesterov flow and the proportional-integral-derivative (PID) accelerated optimizer as direct corollaries. We further establish that, given a fixed virtual control, the universal second-step law is inverse optimal with respect to an induced outer-tracking problem. This reveals that the optimality of the control law is conditionally dependent on the target manifold prescribed by the virtual control, rather than holding globally across all possible backstepping designs. Finally, we formulate a formal optimal-backstepping theorem that elevates this optimality principle to the virtual-control stage by solving a reduced Hamilton--Jacobi--Bellman problem. These contributions collectively yield a robust and general backstepping-driven paradigm for the analysis and design of continuous-time unconstrained optimization algorithms.
\end{abstract}



\begin{IEEEkeywords}
Backstepping control, accelerated optimization algorithm, inverse optimality, optimal control
\end{IEEEkeywords}

\section{Introduction}
\label{sec:introduction}

\IEEEPARstart{C}{ontrol} concepts increasingly shape the design of modern learning systems, not only their analysis. Across contemporary artificial intelligence, control-inspired mechanisms have become architectural and algorithmic primitives: PID feedback ideas have motivated real-time semantic-segmentation architectures such as PIDNet \cite{xu2023pidnet}, explicit conditional-control modules have been introduced into large text-to-image diffusion models through ControlNet \cite{zhang2023adding}, state-space modeling principles rooted in systems and signal processing now underpin scalable sequence architectures such as S4 \cite{gu2022efficiently} and Mamba \cite{gu2024mamba}, and PID-type feedback has been used to accelerate deep-learning optimization \cite{chen2024accelerated}. These examples span perception, generation, sequence modeling, and training dynamics, yet they convey a common message: control can lead to better learning mechanisms, not merely better post hoc explanations. Taken together, they point to a broader and promising research direction—control for better learning—in which feedback, memory, damping, and state-space structure are treated as design primitives for improving modern learning systems.

Within this broad agenda, the present paper focuses on control and optimization. Iterative optimization schemes can often be interpreted as feedback interconnections between an algorithmic update rule and an oracle that provides gradient, stochastic-gradient, or proximal information. From a discrete-time perspective, many optimization algorithms can be represented as feedback interconnections between an algorithmic update rule and an oracle that provides gradient, stochastic-gradient, or proximal information. To make this concrete, consider the standard gradient descent algorithm, $x_{k+1} = x_k - \eta \nabla f(x_k)$. From a control-theoretic perspective, this iterative process is a simple closed-loop system: the optimization variable $x_k$ acts as the system's ``state," the linear update mechanism $x_{k+1} = x_k + \eta u_k$ serves as the dynamic ``plant" (an integrator), and the gradient computation $u_k = -\nabla f(x_k)$ acts as a nonlinear feedback controller that observes the current state and applies a corrective force and this viewpoint has been considered in \cite{hauswirth2024optimization}. Furthermore, classical momentum and accelerated methods \cite{polyak1964some,nesterov269method} have therefore been studied using integral-quadratic-constraint (IQC) analyses and algorithm-design tools \cite{lessard2016analysis,fazlyab2018analysis}, dissipativity-based analyses \cite{hu2017dissipativity,lessard2022analysis}, and Lyapunov arguments for accelerated schemes \cite{wilson2021lyapunov,sanz2021connections}. In addition, proportional-integral-derivative (PID)-type control ideas have been introduced into stochastic optimization, showing that PID feedback can be used to accelerate training dynamics in deep neural networks \cite{wang2020pid}. These developments suggest that control theory provides not only a language for describing iterative optimization algorithms, but also a systematic framework for analyzing convergence rates, robustness margins, and algorithmic parameter choices.

The connection between optimization and control is even more explicit in continuous time, where algorithms are viewed as discretizations or sampled implementations of underlying dynamical systems. Accelerated and momentum methods \cite{polyak1964some,nesterov269method} admit ordinary differential equation (ODE) models \cite{su2016differential}, variational interpretations \cite{wibisono2016variational}, continuous-time Lyapunov explanations of acceleration \cite{wilson2021lyapunov,luo2022differential}, and advanced stability characterizations such as fixed-time stable gradient flows \cite{garg2020fixed}. More refined dynamical descriptions further reveal the distinct mechanisms behind Nesterov acceleration and heavy-ball dynamics, for example through high-resolution differential equations \cite{shi2022understanding} and dynamical-systems interpretations of Nesterov acceleration \cite{muehlebach2019dynamical}. Beyond purely dissipative flows, Hamiltonian and symplectic perspectives emphasize the roles of energy, damping, and structure-preserving discretization in optimization dynamics \cite{maddison2018hamiltonian,muehlebach2021optimization}. Moreover, control-theoretic synthesis has been actively used to explicitly design continuous-time optimization dynamics. For unconstrained and hybrid dynamics, synthesis tools have generated continuous frameworks \cite{kolarijani2019continuous} and output-feedback-based continuous-time PID optimization algorithms \cite{chen2024accelerated,liu2024output,liu2023proposal,liu2026distributed}. Furthermore, constrained optimization problems have been actively reformulated as dynamic regulation tasks using dynamic Karush-Kuhn-Tucker (KKT) controllers \cite{jokic2009constrained}, control barrier functions \cite{allibhoy2023control}, feedback linearization techniques \cite{zhang2025constrained}, and explicit feedback control of Lagrange multipliers \cite{cerone2025new}, alongside formal analyses of the regularity properties of these optimization-based controllers \cite{mestres2025regularity}. Overall, the discrete-time viewpoint highlights feedback interconnection, robustness certification, and controller-like tuning of iterative schemes, whereas the continuous-time viewpoint clarifies the energy, damping, and trajectory structures that underlie acceleration optimization dynamics.

However, a fundamental limitation of the existing literature lies in its predominantly retrospective approach: one first adopts a predefined optimization algorithm or continuous-time model, and then develops a Lyapunov certificate or feedback interpretation to characterize the algorithm’s dynamic behavior. This observation motivates the central question of this paper: \textbf{\textit{Can the synthesis of unconstrained optimization algorithms be formulated not as a retrospective analysis of a pre-existing method, but as a proactive nonlinear control design problem?}}

To answer this question, we develop a constructive backstepping framework for continuous-time optimization dynamics. The motivation for adopting backstepping is twofold. First, many momentum-based \cite{su2016differential},\cite{shi2022understanding} and accelerated optimization models \cite{chen2024accelerated},\cite{attouch2018fast} naturally have a cascaded structure, in which the position variable is driven by a momentum-like state and the momentum dynamics are shaped by a gradient-dependent input. This structure is closely aligned with the strict-feedback form underlying backstepping \cite{krstic1995nonlinear}. Second, backstepping is intrinsically constructive: it recursively builds a stabilizing feedback law together with an associated control Lyapunov function, rather than analyzing a prescribed algorithm after the fact \cite{vaidyanathan2020backstepping}. This constructive versatility has been successfully demonstrated across a wide variety of advanced control paradigms, including output-feedback and event-triggered designs \cite{wang2021event}, making it particularly suitable for proactive algorithm synthesis. To be specific, in this paper, we cast the optimization process as the regulation of the second-order plant
\begin{equation}
    \dot{x}_1 = x_2, \qquad \dot{x}_2 = u,
\end{equation}
possibly augmented with additional dynamic extensions. For unconstrained convex optimization, the condition $\nabla f(x_1)=0$ characterizes optimality under standard assumptions. We therefore take $\nabla f(x_1)$ as the regulated output and design the dynamics so that
$$
    \nabla f(x_1(t)) \to 0 .
$$
Under this formulation, an optimization algorithm is not assumed in advance. It is generated through the backstepping procedure: one first designs a virtual control that defines a desired descent manifold for the position dynamics, and then synthesizes the actual control input that drives the momentum state toward this manifold.

Furthermore, shifting from retrospective analysis to constructive synthesis naturally invites a deeper investigation into algorithm optimality. Classical nonlinear inverse-optimal control demonstrates that stabilizing feedback laws constructed via backstepping are inherently inverse optimal with respect to some meaningful cost functionals \cite{freeman1996inverse}, a property that extends even to modern prescribed-time and stochastic stabilization settings \cite{li2021stochastic}. However, translating this property to backstepping-generated optimization algorithms raises a subtle but critical issue: \textbf{\textit{if the synthesized control law is inverse optimal, what exactly is it optimizing---the overall algorithmic trajectory, or merely the tracking of a specific intermediate path?}}

Our analysis shows that the latter is the correct interpretation. For a fixed virtual control $\alpha$, the actual control $u^\star$ is inverse optimal only for the induced outer-tracking problem associated with the manifold selected by $\alpha$. Hence the inverse optimality of the universal second-step law is conditional rather than global: it is a statement about the optimal tracking of a preassigned target manifold, not an optimality statement over all admissible backstepping designs. Genuine algorithm-level optimality must therefore be lifted to the virtual-control stage.

To the best of our knowledge, the explicit recasting of continuous-time unconstrained optimization algorithm synthesis as a backstepping design problem for a strict-feedback system---together with the separation between outer inverse-optimal tracking and inner virtual-control optimality---has not been formulated in the existing optimization and nonlinear-control literature cited above. This is the central viewpoint advanced in the present paper.

The main contributions are as follows.

\begin{enumerate}
\item We establish an explicit bridge between unconstrained optimization algorithms, strict-feedback systems, and recursive backstepping design. Under this bridge, the task of constructing an optimization algorithm is reformulated as the task of designing a stabilizing feedback law for regulating the output $\nabla f(x_1)$ of a second-order plant together with its augmented strict-feedback extensions.
\item We derive a general backstepping synthesis for augmented strict-feedback systems and a corresponding strict-feedback specialization. This framework recovers a constant-parameter Nesterov flow \cite{nesterov269method},\cite{su2016differential} and the recent proportional-integral-derivative accelerated optimizer (PIDAO) \cite{chen2024accelerated} as direct theorem-level instances.
\item We prove that, for a fixed virtual control, the universal second-step law is inverse optimal only for an induced outer-tracking problem. This provides a precise theoretical statement of why the optimality of $u^\star$ should be interpreted as ``given-manifold optimal tracking'' rather than as a globally optimal backstepping design.
\item We formulate a genuine optimal-backstepping principle by imposing optimality at the virtual-control stage through a reduced Hamilton--Jacobi--Bellman problem. This yields a two-layer interpretation: the virtual control is optimal for a prescribed reduced problem, while the actual control is inverse optimal for the induced outer-tracking problem.
\end{enumerate}

The remainder of the paper is organized as follows. Section~\ref{sec:problem-formulation} formulates the problem. Section~\ref{sec:general-backstepping} develops the general backstepping framework for gradient regulation. Furthermore, Section~\ref{sec: direct theorem applications} presents direct theorem-level realizations corresponding to the constant-parameter Nesterov flow and the PIDAO flow. Section~\ref{sec:inverse-optimality} establishes the fixed-virtual-control inverse-optimality result and the optimal-backstepping principle. Section~\ref{sec:examples} presents representative examples generated by the two-layer optimality framework. Section~\ref{sec:conclusion} concludes.

\section{Problem Formulation}\label{sec:problem-formulation}

Consider the unconstrained optimization problem
\[
\min_{x\in\mathbb{R}^n} f(x),
\]
where $f:\mathbb{R}^n\to\mathbb{R}$ is the objective function. To solve this problem efficiently, one often employs accelerated first-order methods, such as the heavy-ball method \cite{polyak1964some} and Nesterov's accelerated method \cite{nesterov269method}; see also the continuous-time interpretation in \cite{su2016differential}. In continuous time, such accelerated algorithms can be viewed as closed-loop realizations of the second-order plant
\begin{equation}
\begin{aligned}
\dot x_1 &= x_2, \\
\dot x_2 &= u,
\end{aligned}
\label{eq:strict-feedback-plant}
\end{equation}
where $x_1, x_2 \in \mathbb{R}^n$. For example, the feedback law $u = -a x_2 - \nabla f(x_1)$ with $a>0$ produces the heavy-ball dynamics \cite{polyak1964some}. More generally, continuous-time optimization dynamics can be interpreted as feedback laws for \eqref{eq:strict-feedback-plant}, so that the design of an optimization algorithm becomes a controller-design problem: construct $u$ such that the output $y := \nabla f(x_1)$ converges to zero. This viewpoint is also consistent with recent control-inspired optimization constructions such as PIDAO \cite{chen2024accelerated}.

Motivated by this perspective, we first study the more general augmented strict-feedback system
\[
\dot x_1 = x_2, \qquad \dot x_2 = u, \qquad \dot z = q(x_1,z),
\]
where $z\in\mathbb{R}^m$ is an auxiliary state and $q:\mathbb{R}^n\times\mathbb{R}^m\to\mathbb{R}^m$ is assumed to be locally Lipschitz. The auxiliary state is introduced to capture additional controller or algorithmic dynamics beyond the basic second-order plant. For example, one may introduce a filtered-gradient state $z=\xi$ satisfying $\dot \xi = -\beta \xi + \nabla f(x_1)$ with $\beta>0$, so that $\xi$ acts as a filtered gradient state. When the auxiliary dynamics are absent, this system reduces to the standard second-order plant \eqref{eq:strict-feedback-plant}. Let
\[
\begin{aligned}
f^\star &:= \inf_{x\in\mathbb{R}^n} f(x), \\
\phi(x_1) &:= f(x_1) - f^\star.
\end{aligned}
\]
The control objective is to design $u$ such that
\begin{equation}
\nabla f(x_1(t)) \to 0 \qquad \text{as } t\to\infty,
\label{eq:gradient-regulation-goal}
\end{equation}
without requiring convergence of $x_1(t)$ to any prescribed minimizer. {\color{black}This formulation is natural for convex objectives that are not necessarily strongly convex, where stationarity is the fundamental requirement and the minimizer need not be unique.}

To achieve \eqref{eq:gradient-regulation-goal}, this paper adopts a backstepping design for the augmented strict-feedback system \cite{krstic1995nonlinear}. The construction proceeds in two steps.

\begin{enumerate}
\item \textit{Virtual-control design.} The pair $(x_1,z)$ is first viewed as a reduced subsystem with a virtual input $x_2$. One then selects a virtual control
\[
x_2 = \alpha(x_1,z),
\]
so that the reduced dynamics
\[
\dot x_1 = \alpha(x_1,z), \qquad \dot z = q(x_1,z)
\]
render the output $y=\nabla f(x_1)$ dissipative or contractive in a suitable Lyapunov/storage sense. The purpose of this step is to determine the target manifold and the first-step energy structure that encode the desired optimization behavior.

\item \textit{Actual-control design.} Since $x_2$ is a state rather than a directly assignable input, the actual control $u$ must be designed so that $x_2$ tracks the prescribed virtual control $\alpha(x_1,z)$. To this end, one introduces the backstepping error
\[
e := x_2-\alpha(x_1,z),
\]
and then constructs $u$ to stabilize the $e$-dynamics while preserving the first-step dissipation established above. The purpose of this step is to force the full closed-loop system toward the target manifold $e=0$ and thereby recover the desired asymptotic regulation $\nabla f(x_1(t))\to 0$.
\end{enumerate}

To ease the forthcoming analysis, we collect the standing assumptions and auxiliary lemmas used throughout the subsequent development.



\begin{assumption}\label{asm:convex-smooth-objective}
The objective function $f : \mathbb{R}^n \to \mathbb{R}$ is of class $C^2$, convex, and $L$-smooth, i.e.,
\[
\|\nabla f(x) - \nabla f(y)\| \le L\|x-y\|, \qquad \forall x,y\in\mathbb{R}^n,
\]
and is bounded from below: $f^\star > -\infty$.

Under Assumption~\ref{asm:convex-smooth-objective}, convexity and $L$-smoothness imply
\begin{equation}
0 \preceq \nabla^2 f(x_1) \preceq L I.
\label{eq:hessian-smoothness-bound}
\end{equation}
\end{assumption}

\begin{lemma}\cite{nesterov2013introductory}\label{lem:gradient-suboptimality}
Under Assumption~\ref{asm:convex-smooth-objective}, the bound
\[
\frac{1}{2L}\|\nabla f(x_1)\|^2 \le f(x_1) - f^\star = \phi(x_1)
\]
holds for all $x_1\in\mathbb{R}^n$.
\end{lemma}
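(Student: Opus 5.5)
The plan is the standard descent-lemma argument: take one gradient step from $x_1$ and compare the resulting function value with $f^\star$. The only tool needed is the quadratic upper bound implied by $L$-smoothness in Assumption~\ref{asm:convex-smooth-objective}. Convexity is not needed for this particular inequality, and the argument never requires that a minimizer exists; it only uses that $f^\star$ is a finite infimum.

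First I would establish the quadratic upper bound
\[
f(y)\le f(x)+\nabla f(x)^\top (y-x)+\frac{L}{2}\|y-x\|^2,\qquad \forall x,y\in\mathbb{R}^n .
\]
To get it, write $f(y)-f(x)=\int_0^1 \nabla f(x+s(y-x))^\top (y-x)\,ds$. Subtract $\nabla f(x)^\top(y-x)$ from both sides. Then bound the integrand with Cauchy--Schwarz and the Lipschitz estimate $\|\nabla f(x+s(y-x))-\nabla f(x)\|\le Ls\|y-x\|$. Integrating $Ls$ over $[0,1]$ gives the factor $L/2$.

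Since $f$ is $C^2$, an alternative is to use the Hessian bound \eqref{eq:hessian-smoothness-bound} in a second-order Taylor expansion with integral remainder. Both routes are routine.

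Next, fix $x_1$ and apply the bound with $x=x_1$ and $y=x_1-\frac{1}{L}\nabla f(x_1)$. This gives
\[
f\Big(x_1-\tfrac{1}{L}\nabla f(x_1)\Big)\le f(x_1)-\frac{1}{L}\|\nabla f(x_1)\|^2+\frac{1}{2L}\|\nabla f(x_1)\|^2 = f(x_1)-\frac{1}{2L}\|\nabla f(x_1)\|^2 .
\]
Since $f^\star$ is the infimum of $f$ over $\mathbb{R}^n$, the left-hand side is at least $f^\star$. Hence $f^\star\le f(x_1)-\frac{1}{2L}\|\nabla f(x_1)\|^2$. Rearranging gives $\frac{1}{2L}\|\nabla f(x_1)\|^2\le f(x_1)-f^\star=\phi(x_1)$, which is the claim.

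There is no real obstacle here. The only point requiring care is the descent-lemma step, and specifically checking that the constant comes out as $L/2$. The choice of step size $1/L$ is exactly the one that minimizes the quadratic upper bound in $y$, which is why it produces the sharp constant $\frac{1}{2L}$.
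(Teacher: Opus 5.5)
Your proof is correct; the paper gives no proof of this lemma and only cites Nesterov's textbook, and your argument (quadratic upper bound from $L$-smoothness, then one step of size $1/L$, then $f^\star \le f\bigl(x_1-\tfrac{1}{L}\nabla f(x_1)\bigr)$) is the standard proof behind that citation. Your remark that only the infimum $f^\star$ is used is worth keeping. Assumption~\ref{asm:convex-smooth-objective} does not guarantee that a minimizer exists, so the alternative textbook route that evaluates the co-coercivity inequality at a minimizer $x^\star$ would not apply directly in the paper's setting, whereas yours does.
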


\begin{lemma}[Barbalat's Lemma] \cite{khalil2002nonlinear}\label{lem:barbalat}
Let $h:[0,\infty)\to\mathbb{R}$ be uniformly continuous and satisfy $\int_0^\infty |h(t)|\,dt < \infty$. Then $h(t)\to 0$ as $t\to\infty$.
\end{lemma}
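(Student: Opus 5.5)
We argue by contradiction, using integrability of $|h|$ to force small integrals over short windows far out in time and uniform continuity to turn a large value of $|h|$ at a point into a large integral over a window of \emph{fixed} length. Suppose $h(t)\not\to 0$. Then there exist $\varepsilon>0$ and a sequence $t_k\to\infty$ with $|h(t_k)|\ge \varepsilon$ for all $k$.

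By uniform continuity of $h$, choose $\delta>0$, independent of $k$, such that $|h(t)-h(s)|<\varepsilon/2$ whenever $|t-s|\le\delta$. For every $t\in[t_k,t_k+\delta]$ the reverse triangle inequality then gives $|h(t)|\ge |h(t_k)|-|h(t)-h(t_k)|>\varepsilon/2$. Integrating over this window yields
\[
\int_{t_k}^{t_k+\delta}|h(t)|\,dt \ge \frac{\varepsilon\delta}{2}\qquad\text{for all } k.
\]

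On the other hand, $\int_0^\infty |h(t)|\,dt<\infty$ means the tails $\int_T^\infty |h(t)|\,dt$ tend to $0$ as $T\to\infty$. Since $|h|\ge 0$,
\[
\int_{t_k}^{t_k+\delta}|h(t)|\,dt\le \int_{t_k}^{\infty}|h(t)|\,dt\longrightarrow 0\qquad\text{as } k\to\infty.
\]
This contradicts the uniform lower bound $\varepsilon\delta/2>0$, so $h(t)\to 0$.

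Equivalently, one may pass to a subsequence with $t_{k+1}>t_k+\delta$, so that the windows are disjoint. Summing infinitely many contributions of at least $\varepsilon\delta/2$ then makes $\int_0^\infty|h|$ infinite.

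The only delicate step is ensuring that $\delta$ does not depend on $k$. This is exactly what \emph{uniform} continuity provides, as opposed to mere continuity. Without it, the windows could shrink along the sequence and their integrals need not stay bounded below.
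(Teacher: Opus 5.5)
Your proof is correct. The paper gives no proof of this lemma and simply cites Khalil, and your argument is the standard one found there: uniform continuity gives a window length $\delta$ independent of $k$, each window carries $\int_{t_k}^{t_k+\delta}|h|\ge\varepsilon\delta/2$, and this contradicts the vanishing tails of the convergent integral.

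The only difference is in the hypothesis. Khalil's version assumes merely that $\lim_{t\to\infty}\int_0^t h(\tau)\,d\tau$ exists and is finite. There one needs one extra observation: $h$ keeps a constant sign on each window, since it is continuous with $|h|>\varepsilon/2$ there. That converts the lower bound on $\int|h|$ into a lower bound on $\bigl|\int h\bigr|$. The absolute-integrability hypothesis stated in the paper lets you skip this step, as you did.
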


\begin{lemma}\label{lem:gradient-vanishing}
Suppose that along a trajectory of any system satisfying $\dot x_1 = x_2$, the following conditions hold:

\begin{enumerate}
\item $\phi(x_1(t))$ is bounded on $[0,\infty)$;
\item $x_2(t)$ is bounded on $[0,\infty)$;
\item $\int_0^\infty \|\nabla f(x_1(t))\|^2\,dt < \infty$.
\end{enumerate}

Then $\nabla f(x_1(t)) \to 0$ as $t\to\infty$.
\end{lemma}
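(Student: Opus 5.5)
The plan is to apply Barbalat's Lemma (Lemma~\ref{lem:barbalat}) to the scalar function $h(t) := \|\nabla f(x_1(t))\|^2$. Condition~3 of the statement gives $\int_0^\infty |h(t)|\,dt < \infty$ directly, since $h\ge 0$. It therefore suffices to show that $h$ is uniformly continuous on $[0,\infty)$. Barbalat's Lemma then yields $\|\nabla f(x_1(t))\|^2\to 0$, which is equivalent to $\nabla f(x_1(t))\to 0$.

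For uniform continuity, I will show that $h$ is Lipschitz in $t$ on $[0,\infty)$, i.e.\ that $\dot h$ is bounded.

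\emph{Boundedness of the gradient.} By Lemma~\ref{lem:gradient-suboptimality}, $\|\nabla f(x_1(t))\|^2 \le 2L\,\phi(x_1(t))$. Condition~1 then gives $\sup_{t\ge 0}\|\nabla f(x_1(t))\| \le G$ for some finite $G$.

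\emph{Lipschitz bound on $t\mapsto\nabla f(x_1(t))$.} Combining $L$-smoothness with $\dot x_1 = x_2$ gives, for $s,t\ge 0$,
\[
\|\nabla f(x_1(t))-\nabla f(x_1(s))\| \le L\|x_1(t)-x_1(s)\| \le L\Bigl|\int_s^t \|x_2(\tau)\|\,d\tau\Bigr| \le L M |t-s|,
\]
where $M:=\sup_{t\ge0}\|x_2(t)\|<\infty$ by condition~2. (Equivalently, using $C^2$ and \eqref{eq:hessian-smoothness-bound}, $\frac{d}{dt}\nabla f(x_1)=\nabla^2 f(x_1)x_2$ has norm at most $LM$.)

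\emph{Lipschitz bound on $h$.} Using $\bigl|\|a\|^2-\|b\|^2\bigr| \le (\|a\|+\|b\|)\|a-b\|$, we get $|h(t)-h(s)| \le 2G\,LM\,|t-s|$. Hence $h$ is globally Lipschitz and in particular uniformly continuous.

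No step here is a serious obstacle. The only point requiring care is that boundedness of $\phi$ alone does not bound $x_1$, and no such bound is needed. Lemma~\ref{lem:gradient-suboptimality} converts the bound on $\phi$ into a bound on the gradient norm, and that is exactly what the product estimate for $h$ requires. Implicitly I assume the trajectory is defined (absolutely continuous) on all of $[0,\infty)$, as the hypotheses presuppose.
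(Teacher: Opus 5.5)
Your proposal is correct and follows essentially the same route as the paper: Barbalat's Lemma applied to (a constant multiple of) $\|\nabla f(x_1(t))\|^2$, with uniform continuity coming from the gradient bound of Lemma~\ref{lem:gradient-suboptimality} combined with the estimate $L\sup_t\|x_2(t)\|$ on the rate of change of $\nabla f(x_1(t))$. The only difference is cosmetic: you obtain the Lipschitz estimate from $L$-smoothness via differences instead of differentiating $h$ and using the Hessian bound, so your version does not need $f\in C^2$.
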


\begin{proof}
Let $h(t) := \frac{1}{2}\|\nabla f(x_1(t))\|^2$. By Lemma~\ref{lem:gradient-suboptimality}, boundedness of $\phi(x_1(t))$ implies boundedness of $\nabla f(x_1(t))$. Moreover, we have
\[
\dot h
=
\nabla f(x_1(t))^\top \frac{d}{dt}\nabla f(x_1(t))
=
\nabla f(x_1(t))^\top \nabla^2 f(x_1(t))x_2.
\]
Using \eqref{eq:hessian-smoothness-bound}, we can derive
\[
\begin{aligned}
|\dot h|
&\le \|\nabla f(x_1(t))\|\,\|\nabla^2 f(x_1(t))\|\,\|x_2\| \\
&\le L\|\nabla f(x_1(t))\|\,\|x_2\|.
\end{aligned}
\]
Hence, $\dot h$ is bounded, and $h$ is uniformly continuous. Since
\[
\int_0^\infty h(t)\,dt
=
\frac{1}{2}\int_0^\infty \|\nabla f(x_1(t))\|^2\,dt
<
\infty,
\]
Lemma~\ref{lem:barbalat} yields $h(t)\to 0$, i.e., $\nabla f(x_1(t)) \to 0$.
\end{proof}

\section{A General Backstepping Framework for Gradient Regulation}\label{sec:general-backstepping}

We now develop a recursive construction in the spirit of classical backstepping for strict-feedback nonlinear systems \cite{krstic1995nonlinear}, but with the gradient output $\nabla f(x_1)$ rather than the plant state as the regulated quantity.

\begin{theorem}[General backstepping synthesis]\label{thm:general-backstepping}
Consider the augmented system
\begin{equation}
\begin{aligned}
\dot x_1 &= x_2, \\
\dot x_2 &= u, \\
\dot z   &= q(x_1,z),
\end{aligned}
\label{eq:augmented-strict-feedback}
\end{equation}
where $z\in\mathbb{R}^m$ is an auxiliary state and $q:\mathbb{R}^n\times\mathbb{R}^m\to\mathbb{R}^m$ is locally Lipschitz. The case with no auxiliary dynamics is included by taking $m=0$ and suppressing the $z$-dependence. Let a virtual control $\alpha(x_1,z)$ be selected and define the backstepping error
$e := x_2 - \alpha(x_1,z)$.
Assume that the following conditions hold:

\begin{enumerate}
\item There exists a continuously differentiable function $V_1(x_1,z)$ and constants $c_\phi>0$, $c_z>0$, $\sigma>0$, and $\lambda>0$ such that
\begin{equation}
V_1(x_1,z) \ge c_\phi \phi(x_1) + c_z\|z\|^2
\label{eq:storage-lower-bound}
\end{equation}
for all $(x_1,z)$, and such that along
\[
\dot x_1 = \alpha(x_1,z) + e, \qquad \dot z = q(x_1,z)
\]
one has
\begin{equation}
\dot V_1 = -\sigma\|\nabla f(x_1)\|^2 + \lambda \nabla f(x_1)^\top e.
\label{eq:first-step-dissipation}
\end{equation}

\item If $(\nabla f(x_1),z)$ is bounded, then so is $\alpha(x_1,z)$.
\end{enumerate}

We choose the actual control input as
\begin{equation}
u = \dot \alpha - c e - \lambda \nabla f(x_1), \qquad c>0,
\label{eq:backstepping-feedback-law}
\end{equation}
where $\dot\alpha$ denotes the total derivative of $\alpha(x_1,z)$ along \eqref{eq:augmented-strict-feedback}. Then the closed-loop system satisfies
\begin{equation}
\dot V = -\sigma\|\nabla f(x_1)\|^2 - c\|e\|^2, \qquad V := V_1 + \frac{1}{2}\|e\|^2,
\label{eq:closed-loop-dissipation}
\end{equation}
and consequently
\begin{equation}
\nabla f(x_1(t)) \to 0 \qquad \text{as } t\to\infty.
\label{eq:gradient-convergence}
\end{equation}
\end{theorem}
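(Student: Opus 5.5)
The plan is to verify the dissipation identity \eqref{eq:closed-loop-dissipation} by direct differentiation, then extract boundedness and square-integrability, and finally invoke Lemma~\ref{lem:gradient-vanishing}.

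\emph{Closed-loop dissipation.} Along \eqref{eq:augmented-strict-feedback} we have $\dot x_1 = x_2 = \alpha(x_1,z)+e$ and $\dot z=q(x_1,z)$. Hence condition~1 applies and gives $\dot V_1 = -\sigma\|\nabla f(x_1)\|^2+\lambda\nabla f(x_1)^\top e$. For the error, $\dot e = u-\dot\alpha$, and substituting \eqref{eq:backstepping-feedback-law} yields $\dot e = -ce-\lambda\nabla f(x_1)$. Therefore $\frac{d}{dt}\frac12\|e\|^2 = -c\|e\|^2-\lambda e^\top\nabla f(x_1)$. The cross terms cancel, which gives \eqref{eq:closed-loop-dissipation}.

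\emph{Boundedness and integrability.} Since $\dot V\le 0$, we get $V(t)\le V(0)$ on the maximal interval of existence. The lower bound \eqref{eq:storage-lower-bound} and $\phi\ge 0$ then give uniform bounds on $\phi(x_1(t))$, $\|z(t)\|$, and $\|e(t)\|$. We also need $V_1\ge 0$, so that $V$ bounds $\|e\|^2$; this follows from \eqref{eq:storage-lower-bound}. By Lemma~\ref{lem:gradient-suboptimality}, $\|\nabla f(x_1)\|^2\le 2L\phi(x_1)$ is bounded. Condition~2 then gives boundedness of $\alpha(x_1,z)$, and hence of $x_2=\alpha+e$. Integrating \eqref{eq:closed-loop-dissipation} gives $\sigma\int_0^t\|\nabla f(x_1)\|^2\,ds\le V(0)$ for all $t$, so the integral over $[0,\infty)$ is finite.

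\emph{Global existence.} This is the step I expect to be the main subtlety. The closed-loop right-hand side is locally Lipschitz, assuming $\alpha$ is regular enough (e.g.\ $C^1$ with locally Lipschitz derivative) for $\dot\alpha$ to make sense. We know $z$, $e$, $x_2$ stay bounded. However, $x_1$ itself need not be bounded, because $\phi$ bounded does not confine $x_1$. I would argue that $\|\dot x_1\|=\|x_2\|$ is bounded on finite intervals, so $x_1$ grows at most linearly and cannot escape in finite time. Together with the bounds on $(x_2,z,e)$, this rules out finite escape, so solutions exist on $[0,\infty)$.

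\emph{Conclusion.} All three hypotheses of Lemma~\ref{lem:gradient-vanishing} then hold, and it yields \eqref{eq:gradient-convergence}.
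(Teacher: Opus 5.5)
Your proposal is correct and follows essentially the same route as the paper. The cross terms $\pm\lambda\nabla f(x_1)^\top e$ cancel to give \eqref{eq:closed-loop-dissipation}. Monotonicity of $V$ together with \eqref{eq:storage-lower-bound} bounds $\phi(x_1)$, $z$ and $e$. Lemma~\ref{lem:gradient-suboptimality} and condition~2 then bound $x_2$, integrating $\dot V$ gives square-integrability of $\nabla f(x_1)$, and Lemma~\ref{lem:gradient-vanishing} finishes. Your explicit forward-completeness argument is a correct refinement that the paper takes for granted: bounded $x_2$ on the maximal interval forces at most linear growth of $x_1$, which rules out finite escape time.
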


\begin{proof}
Since $e = x_2 - \alpha(x_1,z)$, one has $\dot e = u - \dot\alpha$, and under the feedback law \eqref{eq:backstepping-feedback-law},
\[
\dot e = -ce - \lambda \nabla f(x_1).
\]
Therefore, we can obtain
\[
\begin{aligned}
\dot V &= \dot V_1 + e^\top \dot e \\
&= \bigl(-\sigma\|\nabla f(x_1)\|^2 + \lambda \nabla f(x_1)^\top e\bigr) + e^\top(-ce-\lambda \nabla f(x_1)) \\
&= -\sigma\|\nabla f(x_1)\|^2 - c\|e\|^2,
\end{aligned}
\]
which proves \eqref{eq:closed-loop-dissipation}. Hence, $V(t)$ is nonincreasing and bounded from below. By \eqref{eq:storage-lower-bound}, we can derive
\[
c_\phi\phi(x_1(t)) + c_z\|z(t)\|^2 + \frac{1}{2}\|e(t)\|^2 \le V(t) \le V(0),
\]
which implies boundedness of $\phi(x_1(t))$, $z(t)$, and $e(t)$. By Lemma~\ref{lem:gradient-suboptimality}, $\nabla f(x_1(t))$ is bounded. By assumption, boundedness of $(\nabla f(x_1(t)),z(t))$ implies boundedness of $\alpha(x_1,z)$, hence $x_2(t)=e(t)+\alpha(x_1(t),z(t))$ is bounded. Moreover, from \eqref{eq:closed-loop-dissipation}, we have
\[
\int_0^\infty \|\nabla f(x_1(t))\|^2\,dt \le \frac{V(0)-\inf V}{\sigma} < \infty.
\]
Applying Lemma~\ref{lem:gradient-vanishing} to the projected $(x_1,x_2)$-trajectory yields \eqref{eq:gradient-convergence}.
\end{proof}

\begin{corollary}[Strict-feedback specialization of Theorem~\ref{thm:general-backstepping}]\label{cor:strict-feedback-specialization}
Consider the standard strict-feedback system
\begin{equation}
\dot x_1 = x_2, \qquad \dot x_2 = u.
\label{eq:standard-strict-feedback}
\end{equation}
Let a virtual control $\alpha(x_1)$ be selected and define $e := x_2 - \alpha(x_1)$. Assume that there exist a continuously differentiable function $V_1(x_1)$ and constants $c_\phi>0$, $\sigma>0$, and $\lambda>0$ such that
\[
V_1(x_1) \ge c_\phi\phi(x_1)
\]
for all $x_1$, and such that along
\[
\dot x_1 = \alpha(x_1) + e
\]
one has
\begin{equation}
\dot V_1 = -\sigma\|\nabla f(x_1)\|^2 + \lambda \nabla f(x_1)^\top e.
\label{eq:strict-first-step}
\end{equation}
Assume in addition that boundedness of $\nabla f(x_1)$ implies boundedness of $\alpha(x_1)$. If the actual control is chosen as
\[
u = \dot\alpha - c e - \lambda \nabla f(x_1), \qquad c>0,
\]
where $\dot\alpha = \frac{\partial \alpha}{\partial x_1}x_2$, then the closed-loop system satisfies
\[
\dot V = -\sigma\|\nabla f(x_1)\|^2 - c\|e\|^2, \qquad V := V_1 + \frac{1}{2}\|e\|^2,
\]
and consequently $\nabla f(x_1(t)) \to 0$ as $t\to\infty$.
\end{corollary}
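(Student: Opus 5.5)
The plan is to obtain the corollary as the special case $m=0$ of Theorem~\ref{thm:general-backstepping}, where the auxiliary state $z$ and its dynamics are absent. I would set $V_1(x_1,z)\equiv V_1(x_1)$ and $\alpha(x_1,z)\equiv\alpha(x_1)$, and then check the hypotheses of the theorem one at a time.

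First, the lower bound \eqref{eq:storage-lower-bound}. With no $z$-component, the term $c_z\|z\|^2$ vanishes, so the bound reduces to the assumed $V_1(x_1)\ge c_\phi\phi(x_1)$. If one prefers to keep a formal $c_z>0$, it multiplies an empty norm and the inequality is unchanged.

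Second, the dissipation identity \eqref{eq:first-step-dissipation}. Along $\dot x_1=\alpha(x_1)+e$, this is exactly \eqref{eq:strict-first-step}.

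Third, the boundedness condition. Boundedness of $(\nabla f(x_1),z)$ reduces to boundedness of $\nabla f(x_1)$, and the corollary assumes this implies boundedness of $\alpha(x_1)$.

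Fourth, the feedback law. The total derivative of $\alpha$ along \eqref{eq:augmented-strict-feedback} is $\frac{\partial\alpha}{\partial x_1}\dot x_1+\frac{\partial\alpha}{\partial z}\dot z$. With no $z$, this becomes $\frac{\partial\alpha}{\partial x_1}x_2$, so the law in the corollary coincides with \eqref{eq:backstepping-feedback-law}. The theorem then gives $\dot V=-\sigma\|\nabla f(x_1)\|^2-c\|e\|^2$ and $\nabla f(x_1(t))\to0$.

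The only delicate point is that the degenerate case $m=0$ must be a legitimate instance of the theorem. The theorem states explicitly that it is, but to avoid relying on that convention I would also sketch the direct argument, which mirrors the theorem's proof:
\begin{itemize}
\item Compute $\dot e=u-\frac{\partial\alpha}{\partial x_1}x_2=-ce-\lambda\nabla f(x_1)$.
\item Add $e^\top\dot e$ to \eqref{eq:strict-first-step}; the cross terms $\pm\lambda\nabla f(x_1)^\top e$ cancel, giving the stated identity for $\dot V$.
\item Monotonicity of $V$ bounds $\phi(x_1)$ and $e$. Lemma~\ref{lem:gradient-suboptimality} then bounds $\nabla f(x_1)$, the boundedness hypothesis bounds $\alpha(x_1)$, and so $x_2=e+\alpha(x_1)$ is bounded.
\item Integrating $\dot V$ gives $\int_0^\infty\|\nabla f\|^2\,dt<\infty$, using that $V\ge0$ is bounded below because $\phi\ge0$.
\end{itemize}
Lemma~\ref{lem:gradient-vanishing} then concludes. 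I would also note that $\alpha$ must be $C^1$ for $\dot\alpha$ to be defined; this is implicit in the statement.
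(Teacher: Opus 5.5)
Your proposal is correct and takes the same route as the paper, whose proof is the one line that the corollary is the special case of Theorem~\ref{thm:general-backstepping} obtained by suppressing the auxiliary state $z$. Your hypothesis-by-hypothesis check and the direct Lyapunov sketch spell out that reduction explicitly; they add detail but no new ingredient.
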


\begin{proof}
This is the special case of Theorem~\ref{thm:general-backstepping} obtained by suppressing the auxiliary state $z$.
\end{proof}


\begin{remark}
Theorem~\ref{thm:general-backstepping} and Corollary~\ref{cor:strict-feedback-specialization} provide a structural framework rather than a unique algorithm. Their role is to separate the design into a first-step choice of the virtual control $\alpha$ and a second-step synthesis of the actual input $u$.

\begin{enumerate}
\item \textit{Choosing the virtual control.} The virtual control determines the target manifold
\[
\mathcal M_\alpha := \{(x_1,z,x_2): x_2=\alpha(x_1,z)\}.
\]
Its purpose is to shape the reduced dynamics
\[
\dot x_1=\alpha(x_1,z), \qquad \dot z=q(x_1,z),
\]
so that the first-step storage function $V_1$ satisfies the dissipation relation \eqref{eq:first-step-dissipation}. Different choices of $\alpha$ lead to different optimization flows. Typical examples include
\begin{itemize}
\item a gradient-type virtual control, e.g.,
\[
\alpha(x_1)=-k\nabla f(x_1);
\]
\item a normalized-gradient virtual control, e.g.,
\[
\alpha(x_1)=-k\frac{\nabla f(x_1)}{\varepsilon+\|\nabla f(x_1)\|}, \qquad \varepsilon>0;
\]
\item an adaptive or preconditioned gradient law, e.g.,
\[
\alpha(x_1,z)=-D(z)\nabla f(x_1),
\]
where $D(z)$ is a state-dependent positive-definite scaling matrix generated by auxiliary dynamics, in the spirit of adaptive-gradient methods.
\end{itemize}

\item \textit{Designing the actual control.} Once $\alpha$ has been fixed, the actual control $u$ no longer decides the target manifold; instead, it is chosen to drive the full system toward $\mathcal M_\alpha$. This is achieved through the backstepping error
\[
e=x_2-\alpha(x_1,z),
\]
which measures the deviation from the prescribed virtual behavior. The second-step feedback law \eqref{eq:backstepping-feedback-law} is constructed precisely so that $e$ is stabilized and the full state approaches the manifold $e=0$, while preserving the first-step dissipation encoded by $V_1$.
\end{enumerate}
\label{rem: target manifold}
\end{remark}

\section{Direct Theorem-Level Applications via Gradient-Type Virtual Controls}\label{sec: direct theorem applications}

Theorem~\ref{thm:general-backstepping} and Corollary~\ref{cor:strict-feedback-specialization} already encode a family of optimization dynamics once the virtual control has been specified. In particular, by choosing gradient-type virtual controls, one can directly obtain theorem-level realizations of both the constant-parameter Nesterov flow and the PIDAO flow. The first arises from the standard strict-feedback case covered by Corollary~\ref{cor:strict-feedback-specialization}, whereas the second arises from the augmented setting covered by Theorem~\ref{thm:general-backstepping}.

\begin{theorem}[Constant-parameter Nesterov flow]\label{thm:nesterov-flow}
Let Assumption~\ref{asm:convex-smooth-objective} hold. Consider the strict-feedback plant \eqref{eq:standard-strict-feedback}. Select the virtual control $\alpha(x_1) := -k_1 \nabla f(x_1)$ with $k_1>0$, and define the backstepping error
\begin{equation}
e := x_2 - \alpha(x_1) = x_2 + k_1 \nabla f(x_1).
\label{eq:nesterov-error}
\end{equation}
Choose the first-step Lyapunov function $V_1(x_1) := \phi(x_1)$. Then, along the first-step dynamics
\[
\dot x_1 = \alpha(x_1) + e,
\]
one has
\begin{equation}
\dot V_1 = -k_1\|\nabla f(x_1)\|^2 + \nabla f(x_1)^\top e.
\label{eq:nesterov-first-step}
\end{equation}
Hence the hypotheses of Corollary~\ref{cor:strict-feedback-specialization} are satisfied with $c_\phi=1$, $\sigma=k_1$, and $\lambda=1$. If the actual control is selected according to the backstepping law
\begin{equation}
u = \dot\alpha - k_2 e - \nabla f(x_1), \qquad k_2>0,
\label{eq:nesterov-feedback-law}
\end{equation}
then the explicit feedback law becomes
\begin{equation}
u = -k_2\bigl(x_2 + k_1\nabla f(x_1)\bigr) - \nabla f(x_1) - k_1\nabla^2f(x_1)x_2,
\label{eq:nesterov-explicit-law}
\end{equation}
The resulting second-order dynamics are
\begin{equation}
\ddot X + k_2\dot X + (1+k_1k_2)\nabla f(X) + k_1\nabla^2f(X)\dot X = 0,
\label{eq:nesterov-second-order}
\end{equation}
and the corresponding closed-loop trajectory satisfies
\begin{equation}
\nabla f(x_1(t)) \to 0 \qquad \text{as } t\to\infty.
\label{eq:nesterov-gradient-convergence}
\end{equation}
\end{theorem}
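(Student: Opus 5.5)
The plan is to verify the hypotheses of Corollary~\ref{cor:strict-feedback-specialization} for the given choices and then read off the closed loop. The argument has three parts.

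\emph{First-step identity.} Since $V_1=\phi$, we have $\nabla V_1=\nabla f$. Along $\dot x_1=\alpha(x_1)+e=-k_1\nabla f(x_1)+e$ this gives
\[
\dot V_1=\nabla f(x_1)^\top\bigl(-k_1\nabla f(x_1)+e\bigr),
\]
which is exactly \eqref{eq:nesterov-first-step}. The lower bound $V_1\ge c_\phi\phi$ holds trivially with $c_\phi=1$. We also get $\sigma=k_1>0$ and $\lambda=1$. The remaining hypothesis is that a bounded $\nabla f(x_1)$ forces a bounded $\alpha$. This is immediate, since $\|\alpha(x_1)\|=k_1\|\nabla f(x_1)\|$. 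One further point needs checking: Corollary~\ref{cor:strict-feedback-specialization} uses $\dot\alpha=\frac{\partial\alpha}{\partial x_1}x_2$, which requires $\alpha\in C^1$. This holds because $f\in C^2$ by Assumption~\ref{asm:convex-smooth-objective}, with $\frac{\partial\alpha}{\partial x_1}=-k_1\nabla^2 f(x_1)$.

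\emph{Explicit law.} Substitute $\dot\alpha=-k_1\nabla^2 f(x_1)x_2$ and $e=x_2+k_1\nabla f(x_1)$ into \eqref{eq:nesterov-feedback-law}; this gives \eqref{eq:nesterov-explicit-law} directly. Then set $X=x_1$, so that $\dot X=x_2$ and $\ddot X=u$. Expanding $-k_2(\dot X+k_1\nabla f(X))-\nabla f(X)$ collects the gradient coefficient $1+k_1k_2$, and moving everything to one side yields \eqref{eq:nesterov-second-order}.

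\emph{Convergence.} Corollary~\ref{cor:strict-feedback-specialization} gives $\dot V=-k_1\|\nabla f\|^2-k_2\|e\|^2$ with $V=\phi+\tfrac12\|e\|^2$, and hence \eqref{eq:nesterov-gradient-convergence}. Existence of solutions on $[0,\infty)$ also needs a word. The closed-loop vector field is continuous but only locally Lipschitz if $\nabla^2 f$ is, so I would argue existence via continuity (Peano). Forward completeness then follows from the boundedness of $V$: this bounds $\phi$, $e$ and $\nabla f$, hence $x_2$, and so $x_1$ grows at most linearly and cannot blow up in finite time.

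The main obstacle is this well-posedness point. Everything else is routine substitution, and the corollary already handles the Barbalat argument.
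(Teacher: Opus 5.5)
Your proposal is correct and follows the paper's proof. Like the paper, you verify the first-step identity with $V_1=\phi$, compute $\dot\alpha=-k_1\nabla^2 f(x_1)x_2$, substitute to obtain the explicit law and the second-order form, and then invoke Corollary~\ref{cor:strict-feedback-specialization} for convergence; your extra checks (boundedness and $C^1$ regularity of $\alpha$, and Peano existence with forward completeness from the bound on $V$) are sound refinements that the paper leaves implicit.
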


\begin{proof}
Since $x_2 = e-k_1\nabla f(x_1)$, one obtains
\[
\dot V_1 = \dot\phi = \nabla f(x_1)^\top x_2 = -k_1\|\nabla f(x_1)\|^2 + \nabla f(x_1)^\top e,
\]
which proves \eqref{eq:nesterov-first-step}. Thus the hypotheses of Corollary~\ref{cor:strict-feedback-specialization} are satisfied. Moreover,
\begin{equation}
\dot\alpha = -k_1\nabla^2f(x_1)x_2.
\label{eq:nesterov-alpha-derivative}
\end{equation}
Substituting \eqref{eq:nesterov-error} and \eqref{eq:nesterov-alpha-derivative} into \eqref{eq:nesterov-feedback-law} yields \eqref{eq:nesterov-explicit-law}, and therefore \eqref{eq:nesterov-second-order}. The asymptotic property \eqref{eq:nesterov-gradient-convergence} follows immediately from Corollary~\ref{cor:strict-feedback-specialization}.
\end{proof}

\medskip
Recently, Chen \textit{et al.} \cite{chen2024accelerated} introduced the PIDAO from a control perspective. The next theorem shows that its continuous-time dynamics arise directly from the augmented backstepping framework.

\begin{theorem}[PIDAO as a direct application of Theorem~\ref{thm:general-backstepping}]\label{thm:pidao-flow}
Let Assumption~\ref{asm:convex-smooth-objective} hold. Consider the augmented strict-feedback system
\[
\begin{aligned}
\dot x_1 &= x_2, \\
\dot x_2 &= u, \\
\dot \xi &= \nabla f(x_1).
\end{aligned}
\]
Choose the virtual control and the backstepping error as
\begin{equation}
\begin{aligned}
\alpha(x_1,\xi) &:= -k_d \nabla f(x_1) - \frac{k_i}{a}\xi, \\
e &:= x_2 - \alpha = x_2 + k_d \nabla f(x_1) + \frac{k_i}{a}\xi,
\end{aligned}
\label{eq:pidao-virtual-error}
\end{equation}
where $a>0$, $k_d>0$, and $k_i>0$. Define
\begin{equation}
\rho := k_p - a k_d - \frac{k_i}{a},
\label{eq:pidao-rho-balance}
\end{equation}
and assume that $\rho > 0$. Let the first-step Lyapunov function be chosen as
\begin{equation}
V_1(x_1,\xi) := \rho\left(\phi(x_1) + \frac{k_i}{2a}\|\xi\|^2\right).
\label{eq:pidao-storage}
\end{equation}
Then, along the first-step dynamics
\begin{equation}
\dot x_1 = \alpha(x_1,\xi) + e, \qquad \dot \xi = \nabla f(x_1),
\label{eq:pidao-first-step-system}
\end{equation}
one has
\begin{equation}
\dot V_1 = -\rho k_d\|\nabla f(x_1)\|^2 + \rho \nabla f(x_1)^\top e.
\label{eq:pidao-first-step}
\end{equation}
Hence the hypotheses of Theorem~\ref{thm:general-backstepping} are satisfied with $z=\xi$, $q(x_1,\xi)=\nabla f(x_1)$, $c_\phi=\rho$, $c_z=\rho k_i/(2a)$, $\sigma=\rho k_d$, and $\lambda=\rho$; moreover, boundedness of $(\nabla f(x_1),\xi)$ implies boundedness of $\alpha(x_1,\xi)$. If the actual control is selected according to \eqref{eq:backstepping-feedback-law} with $c=a$ and $\lambda=\rho$, then
\begin{equation}
u = \dot\alpha - a e - \rho \nabla f(x_1),
\label{eq:pidao-feedback-law}
\end{equation}
which yields the explicit feedback law
\begin{equation}
u = -a x_2 - k_p\nabla f(x_1) - k_i\xi - k_d\nabla^2 f(x_1)x_2,
\label{eq:pidao-explicit-law}
\end{equation}
The corresponding second-order dynamics are
\begin{equation}
\ddot X + a\dot X + k_p\nabla f(X) + k_i\int_0^t \nabla f(X(s))\,ds + k_d\frac{d}{dt}\nabla f(X) = 0.
\label{eq:pidao-second-order}
\end{equation}
Consequently,
\begin{equation}
\nabla f(x_1(t)) \to 0 \qquad \text{as } t\to\infty.
\label{eq:pidao-gradient-convergence}
\end{equation}
\end{theorem}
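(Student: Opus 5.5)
The plan is to verify the hypotheses of Theorem~\ref{thm:general-backstepping} directly for $z=\xi$, $q(x_1,\xi)=\nabla f(x_1)$. Then a short algebraic computation will show that the resulting feedback law \eqref{eq:pidao-feedback-law} coincides with the PIDAO law \eqref{eq:pidao-explicit-law}, and convergence \eqref{eq:pidao-gradient-convergence} will follow from \eqref{eq:gradient-convergence}.

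\emph{Hypothesis 1 (storage bound and dissipation).} The lower bound \eqref{eq:storage-lower-bound} holds with equality, with $c_\phi=\rho>0$ and $c_z=\rho k_i/(2a)>0$, since $\rho>0$, $k_i>0$ and $a>0$. For the dissipation identity, I differentiate $V_1$ along \eqref{eq:pidao-first-step-system}:
\[
\dot V_1=\rho\Bigl(\nabla f(x_1)^\top \dot x_1+\tfrac{k_i}{a}\xi^\top\nabla f(x_1)\Bigr).
\]
I then substitute $\dot x_1=\alpha+e=e-k_d\nabla f(x_1)-\tfrac{k_i}{a}\xi$. The cross terms $\pm\tfrac{k_i}{a}\nabla f(x_1)^\top\xi$ cancel, which leaves exactly \eqref{eq:pidao-first-step}. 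The weight $k_i/(2a)$ on $\|\xi\|^2$ in \eqref{eq:pidao-storage} is chosen precisely to produce this cancellation, and it is the only nontrivial point of this step.

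\emph{Hypothesis 2 (boundedness of $\alpha$).} This is immediate, since $\alpha$ is linear in $(\nabla f(x_1),\xi)$, so $\|\alpha\|\le k_d\|\nabla f(x_1)\|+\tfrac{k_i}{a}\|\xi\|$.

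\emph{Identification with PIDAO.} I compute the total derivative
\[
\dot\alpha=-k_d\nabla^2 f(x_1)x_2-\tfrac{k_i}{a}\nabla f(x_1).
\]
Inserting this and $e=x_2+k_d\nabla f(x_1)+\tfrac{k_i}{a}\xi$ into $u=\dot\alpha-ae-\rho\nabla f(x_1)$, the $\xi$-term becomes $-k_i\xi$. The coefficient of $\nabla f(x_1)$ becomes $-(\tfrac{k_i}{a}+ak_d+\rho)=-k_p$ by the balance relation \eqref{eq:pidao-rho-balance}. This gives \eqref{eq:pidao-explicit-law}. Taking $\xi(0)=0$, so that $\xi(t)=\int_0^t\nabla f(X(s))\,ds$, and using $\nabla^2 f(X)\dot X=\tfrac{d}{dt}\nabla f(X)$, I obtain \eqref{eq:pidao-second-order}.

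\emph{Conclusion.} All hypotheses of Theorem~\ref{thm:general-backstepping} hold with $c=a$ and $\lambda=\rho$, so \eqref{eq:pidao-gradient-convergence} follows from \eqref{eq:gradient-convergence}. The main obstacle is only the bookkeeping in the cancellation step and in matching coefficients via \eqref{eq:pidao-rho-balance}; no analytic difficulty arises beyond what Theorem~\ref{thm:general-backstepping} already handles.
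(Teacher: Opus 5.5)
Your proposal is correct and follows essentially the same route as the paper: differentiate $V_1$ so the $\pm\frac{k_i}{a}\nabla f(x_1)^\top\xi$ terms cancel, compute $\dot\alpha$, substitute into $u=\dot\alpha-ae-\rho\nabla f(x_1)$ and use the balance relation \eqref{eq:pidao-rho-balance} to recover $k_p$, then invoke Theorem~\ref{thm:general-backstepping}. Your explicit checks of the storage bound and the boundedness of $\alpha$, and your note that $\xi(0)=0$ is needed for the integral form \eqref{eq:pidao-second-order}, are details the paper leaves implicit.
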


\begin{proof}
Differentiating \eqref{eq:pidao-storage} along \eqref{eq:pidao-first-step-system} gives
\[
\dot V_1
=
\rho \nabla f(x_1)^\top(\alpha+e) + \rho\frac{k_i}{a}\xi^\top \nabla f(x_1).
\]
Substituting $\alpha=-k_d \nabla f(x_1)-\frac{k_i}{a}\xi$ from \eqref{eq:pidao-virtual-error} yields
\[
\begin{aligned}
\dot V_1
&=
\rho \nabla f(x_1)^\top
\left(-k_d \nabla f(x_1)-\frac{k_i}{a}\xi+e\right) \\
&\quad + \rho\frac{k_i}{a}\xi^\top \nabla f(x_1) \\
&=
-\rho k_d\|\nabla f(x_1)\|^2 + \rho \nabla f(x_1)^\top e,
\end{aligned}
\]
which proves \eqref{eq:pidao-first-step}. Since
\begin{equation}
\dot\alpha = -k_d\nabla^2 f(x_1)x_2 - \frac{k_i}{a}\nabla f(x_1),
\label{eq:pidao-alpha-derivative}
\end{equation}
substituting \eqref{eq:pidao-virtual-error}, \eqref{eq:pidao-rho-balance}, and \eqref{eq:pidao-alpha-derivative} into \eqref{eq:pidao-feedback-law} yields
\[
\begin{aligned}
u
&=
-k_d\nabla^2 f(x_1)x_2 - \frac{k_i}{a}\nabla f(x_1) \\
&\quad - a\left(x_2 + k_d \nabla f(x_1) + \frac{k_i}{a}\xi\right)
 - \rho \nabla f(x_1) \\
&=
-a x_2 - \left(a k_d + \rho + \frac{k_i}{a}\right)\nabla f(x_1) \\
&\quad - k_i\xi - k_d\nabla^2 f(x_1)x_2 \\
&=
-a x_2 - k_p\nabla f(x_1) - k_i\xi \\
&\quad - k_d\nabla^2 f(x_1)x_2,
\end{aligned}
\]
which is \eqref{eq:pidao-explicit-law}. Equation~\eqref{eq:pidao-second-order} is the corresponding second-order form. The conclusion \eqref{eq:pidao-gradient-convergence} then follows directly from Theorem~\ref{thm:general-backstepping}.
\end{proof}

\section{Fixed-Virtual-Control Inverse Optimality and an Optimal Backstepping Principle}\label{sec:inverse-optimality}

The next results separate two layers of optimality. The first layer is an inverse-optimal interpretation akin to classical nonlinear inverse-optimal control \cite{freeman1996inverse}, \cite{krstic1998inverse}, but specialized here to the backstepping architecture induced by a fixed virtual control. The second layer lifts the optimality requirement to the virtual-control stage itself.

Following Remark~\ref{rem: target manifold}, once the virtual control $\alpha$ is fixed, the target manifold
\[
\mathcal M_\alpha := \{(x_1,x_2,z): x_2=\alpha(x_1,z)\}
\]
is fixed as well. The role of the actual input $u$ is then clear: it must drive the full system toward $\mathcal M_\alpha$, equivalently, it must regulate the outer error $e:=x_2-\alpha(x_1,z)$ while preserving the first-step dissipation generated by $(\alpha,V_1)$. At first sight, this role is purely stabilizing or tracking in nature. The point of the next result is that the universal second-step law does more than stabilize this outer error: once $\alpha$ has selected the target manifold, the same law also solves an induced optimal control problem for the outer dynamics.

Specifically, fix an admissible virtual control $\alpha(x_1,z)$ and a first-step storage function $V_1(x_1,z)$ satisfying the hypotheses of Theorem~\ref{thm:general-backstepping}. For an arbitrary admissible control input $u$, the transformed dynamics in the coordinates $(x_1,z,e)$ are
\[
\dot x_1 = \alpha(x_1,z) + e, \qquad \dot z = q(x_1,z), \qquad \dot e = u - \dot\alpha.
\]
Define
\[
\begin{aligned}
V &:= V_1 + \frac{1}{2}\|e\|^2, \\
\ell_\alpha(x_1,z,e,u)
&:= \sigma\|\nabla f(x_1)\|^2 + \frac{c}{2}\|e\|^2 \\
&\quad + \frac{1}{2c}\|u-\dot\alpha+\lambda \nabla f(x_1)\|^2.
\end{aligned}
\]
Along every admissible input $u$, the square-completion identity
\begin{equation}
\dot V + \ell_\alpha
=
\frac{1}{2c}\|u-\dot\alpha+\lambda \nabla f(x_1) + ce\|^2 \ge 0.
\label{eq:outer-square-completion}
\end{equation}
holds.

\begin{proposition}[Fixed-virtual-control inverse optimality]\label{prop:fixed-virtual-optimality}
With $\alpha$, $V_1$, $V$, and $\ell_\alpha$ defined above, define for each $T>0$ the Bolza functional
\[
J_T^\alpha(u)
:=
V(x_1(T),z(T),e(T))
+
\int_0^T \ell_\alpha(x_1,z,e,u)\,dt.
\]
Then the control
\[
u^\star := \dot\alpha - ce - \lambda \nabla f(x_1)
\]
is, for every $T>0$, the unique minimizer of the optimal control problem
\[
\min_u J_T^\alpha(u),
\]
where the minimization is over all admissible controls on $[0,T]$.
\end{proposition}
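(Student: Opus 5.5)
The plan is the standard inverse-optimality argument: integrate the square-completion identity \eqref{eq:outer-square-completion} along an arbitrary admissible trajectory. The Bolza cost then splits into a term that does not depend on the control plus a nonnegative penalty that vanishes exactly at $u^\star$.

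\textbf{Step 1: verify the identity.} Along the $(x_1,z,e)$-dynamics with an arbitrary input $u$, the first-step relation \eqref{eq:first-step-dissipation} gives
\[
\dot V=-\sigma\|\nabla f(x_1)\|^2+\lambda\nabla f(x_1)^\top e+e^\top w, \qquad w:=u-\dot\alpha.
\]
Adding $\ell_\alpha$ and writing $v:=w+\lambda\nabla f(x_1)$ reduces the right-hand side to
\[
\frac{c}{2}\|e\|^2+e^\top v+\frac{1}{2c}\|v\|^2=\frac{1}{2c}\|v+ce\|^2,
\]
which is \eqref{eq:outer-square-completion}. The argument only uses \eqref{eq:first-step-dissipation}, which the hypotheses of Theorem~\ref{thm:general-backstepping} guarantee for every value of $e$. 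Hence the identity holds for every admissible $u$, not only in closed loop.

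\textbf{Step 2: integrate over $[0,T]$.} Integrating from $0$ to $T$ and adding $V(0)$ to both sides gives
\[
J_T^\alpha(u)=V(x_1(0),z(0),e(0))+\frac{1}{2c}\int_0^T\bigl\|u-u^\star\bigr\|^2dt,
\]
where $u^\star=\dot\alpha-ce-\lambda\nabla f(x_1)$ is evaluated along the trajectory generated by $u$ itself. The initial state is fixed, so the first term is the same for every admissible control. Therefore $J_T^\alpha(u)\ge V(0)$.

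\textbf{Step 3: attainment and uniqueness.} For $u=u^\star$ the closed loop is well posed, with solutions existing on $[0,T]$ by the boundedness argument of Theorem~\ref{thm:general-backstepping}. The integrand vanishes identically, so $J_T^\alpha(u^\star)=V(0)$ and $u^\star$ is a minimizer.

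The main obstacle is uniqueness, because $u^\star$ is a feedback law while the integrand compares $u$ with $u^\star$ evaluated along $u$'s own trajectory. Suppose $J_T^\alpha(\tilde u)=V(0)$. Then
\[
\tilde u(t)=\dot\alpha-ce-\lambda\nabla f(x_1)
\]
for almost every $t$ along the $\tilde u$-trajectory. That trajectory therefore solves the closed-loop ODE
\[
\dot x_1=\alpha+e,\qquad \dot z=q,\qquad \dot e=-ce-\lambda\nabla f(x_1),
\]
from the same initial condition. This closed-loop vector field is locally Lipschitz, since $\nabla f$ is Lipschitz, $q$ is locally Lipschitz, and $\alpha$ is assumed regular enough for $\dot\alpha$ to be defined. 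Picard--Lindel\"of uniqueness then makes the trajectories coincide, and hence $\tilde u=u^\star$ almost everywhere on $[0,T]$.

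For the Lipschitz claim I will need $\alpha$ to be $C^1$ with a locally Lipschitz derivative, and I will state this as part of admissibility. It is satisfied in the examples of Theorems~\ref{thm:nesterov-flow} and \ref{thm:pidao-flow} when $f\in C^2$, modulo local Lipschitzness of $\nabla^2 f$. The same step also requires that admissible controls be, say, locally integrable, so that solutions are absolutely continuous and the integration in Step 2 is valid.
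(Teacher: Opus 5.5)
Your proof is correct and follows the paper's argument. Both verify the square-completion identity \eqref{eq:outer-square-completion} for an arbitrary input, integrate it over $[0,T]$ to get $J_T^\alpha(u)\ge V(0)$, and identify equality with $u$ coinciding with the feedback $\dot\alpha-ce-\lambda\nabla f(x_1)$ along its own trajectory.

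The only real divergence is the uniqueness step. The paper closes it by noting that $\ell_\alpha$ is strictly convex in $u$. That alone does not rule out a different open-loop signal that agrees with the feedback along a different closed-loop solution. Your Picard--Lindel\"of argument supplies exactly the missing ingredient, and you also address existence of the $u^\star$ trajectory on $[0,T]$, which the paper leaves implicit.

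You can drop your extra hypothesis that $\alpha$ have a locally Lipschitz derivative, and with it the caveat about $\nabla^2 f$. In the $(x_1,z,e)$ coordinates you already use, the closed-loop field $(\alpha+e,\,q,\,-ce-\lambda\nabla f(x_1))$ contains no $\dot\alpha$. So $\alpha\in C^1$, local Lipschitzness of $q$, and Lipschitzness of $\nabla f$ already give uniqueness of solutions. The $C^1$ change of variables $e=x_2-\alpha(x_1,z)$ then transfers this uniqueness back to $(x_1,z,x_2)$.
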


\begin{proof}
For an arbitrary admissible input $u$, one has
\[
\dot V = \dot V_1 + e^\top(u-\dot\alpha)
= -\sigma\|\nabla f(x_1)\|^2 + e^\top(u-\dot\alpha+\lambda \nabla f(x_1)),
\]
where the last identity follows from \eqref{eq:first-step-dissipation}. The functional $J_T^\alpha$ is the natural Bolza functional associated with the outer dynamics, because integrating the square-completion identity \eqref{eq:outer-square-completion} over $[0,T]$ yields
\begin{equation}
\begin{aligned}
J_T^\alpha(u)
&= V(x_1(T),z(T),e(T))  + \int_0^T \ell_\alpha(x_1,z,e,u)\,dt \\
&\ge V(x_1(0),z(0),e(0)).
\end{aligned}
\label{eq:outer-bolza-bound}
\end{equation}
Moreover, equality in \eqref{eq:outer-bolza-bound} holds if and only if the square term in \eqref{eq:outer-square-completion} vanishes identically, namely,
\begin{equation}
u = \dot\alpha - ce - \lambda \nabla f(x_1).
\label{eq:outer-optimal-control}
\end{equation}
Hence $u^\star$ attains the minimum value of $J_T^\alpha$. Since $\ell_\alpha$ is strictly convex in $u$, the minimizer is unique.
\end{proof}

\begin{remark}
The virtual control $\alpha$ selects the target manifold
\[
\mathcal M_\alpha := \{(x_1,x_2,z): x_2 = \alpha(x_1,z)\}.
\]
Because $e=0$ if and only if the state lies on $\mathcal M_\alpha$, the outer variable $e=x_2-\alpha(x_1,z)$ measures the deviation from that manifold. The Bolza functional $J_T^\alpha$ therefore has a direct geometric meaning. Its terminal term
\[
V(T)=V_1\bigl(x_1(T),z(T)\bigr)+\frac{1}{2}\|e(T)\|^2
\]
measures the residual first-step energy together with the residual distance from $\mathcal M_\alpha$ at the terminal time. Its running cost
\begin{align*}
\ell_\alpha(x_1,z,e,u)
&=\sigma\|\nabla f(x_1)\|^2 + \frac{c}{2}\|e\|^2 \\
&\quad + \frac{1}{2c}\|u-\dot\alpha+\lambda \nabla f(x_1)\|^2
\end{align*}
penalizes, respectively, the remaining gradient energy, the instantaneous deviation from the selected manifold, and the excess control effort relative to the exact backstepping correction needed to complete the square.
\end{remark}

From this viewpoint, the proposition says more than ``$u^\star$ stabilizes $e$'': once $\alpha$ has fixed the target manifold, the actual control law
\[
u^\star = \dot\alpha - ce - \lambda \nabla f(x_1)
\]
is precisely the control that steers the full system toward $\mathcal M_\alpha$ with minimum induced outer cost. In this sense, the inverse optimality of $u^\star$ is an outer tracking optimality relative to a fixed virtual-control design, and the functional $J_T^\alpha$ is not chosen ad hoc after the fact; it is the Bolza problem naturally generated by the storage identity associated with the chosen virtual control.
Figure~\ref{fig:single-layer-optimality-flow} summarizes this single-layer interpretation.

\begin{figure}[!t]
\centering
\begin{tikzpicture}[
  >=Stealth,
  box/.style={draw, rounded corners=4pt, align=center, text width=0.84\linewidth, minimum height=1.55cm, inner sep=7pt},
  arrow/.style={->, thick, draw=black!75},
  flowlabel/.style={font=\scriptsize, align=center, text=black!70}
]
\node[box, fill=orange!12] (alpha) {\textbf{Fix a virtual control $\alpha$}\\[3pt]
The target manifold $\mathcal M_\alpha=\{x_2=\alpha(x_1,z)\}$ and the outer error $e=x_2-\alpha(x_1,z)$ are determined.};

\node[box, fill=green!12, below=0.45cm of alpha] (cost) {\textbf{Induced outer problem}\\[3pt]
The storage identity generates the Bolza functional $J_T^\alpha(u)$ associated with the chosen manifold $\mathcal M_\alpha$.};

\node[box, fill=blue!10, below=0.45cm of cost] (u) {\textbf{Universal second-step law}\\[3pt]
Proposition~\ref{prop:fixed-virtual-optimality} identifies
\[
u^\star=\dot\alpha-ce-\lambda \nabla f(x_1)
\]
as the unique minimizer of $\min_u J_T^\alpha(u)$.};

\node[box, fill=red!10, below=0.45cm of u] (single) {\textbf{Single-layer inverse optimality}\\[3pt]
The actual control is optimal only relative to the fixed virtual-control design. Different admissible choices of $\alpha$ generate different manifolds and different outer problems.};

\draw[arrow] (alpha) -- node[flowlabel, right] {the chosen $\alpha$ induces $e$ and $\mathcal M_\alpha$} (cost);
\draw[arrow] (cost) -- node[flowlabel, right] {complete the square in the outer dynamics} (u);
\draw[arrow] (u) -- node[flowlabel, right] {conditional on the prescribed $\alpha$} (single);
\end{tikzpicture}
\caption{Single-layer optimal backstepping design for gradient flow. Once the virtual control $\alpha$ is fixed, it determines the target manifold and the induced outer Bolza functional. Proposition~\ref{prop:fixed-virtual-optimality} then yields the unique optimal actual control for this conditional outer problem, while Proposition~\ref{prop:virtual-design-freedom} explains why changing $\alpha$ changes the single-layer problem itself.}
\label{fig:single-layer-optimality-flow}
\end{figure}

\begin{remark}[Nesterov flow as an instance of Proposition~\ref{prop:fixed-virtual-optimality}]
The abstract statement of Proposition~\ref{prop:fixed-virtual-optimality} becomes very concrete for the gradient-type virtual control
\[
\alpha(x_1)=-k_1\nabla f(x_1),
\]
which later yields the constant-parameter Nesterov flow in Theorem~\ref{thm:nesterov-flow}. In this case,
$
V_1(x_1)=\phi(x_1)
$,
$
e=x_2+k_1\nabla f(x_1)
$,
$
\sigma=k_1
$,
$
\lambda=1
$,
and
$
c=k_2
$; the selected target manifold is
\[
\mathcal M_\alpha=\{(x_1,x_2): x_2=-k_1\nabla f(x_1)\}.
\]
The induced outer optimal control problem is therefore to minimize
\[
\begin{aligned}
J_{T}^{\mathrm{Nest}}(u)
:={}& \phi\bigl(x_1(T)\bigr) \\
&+ \frac{1}{2}\|x_2(T)+k_1\nabla f(x_1(T))\|^2 \\
&+ \int_0^T \Bigl[
k_1\|\nabla f(x_1)\|^2
+ \frac{k_2}{2}\|x_2+k_1\nabla f(x_1)\|^2 \\
&\qquad\qquad
+ \frac{1}{2k_2}\|u-\dot\alpha+\nabla f(x_1)\|^2
\Bigr]dt.
\end{aligned}
\]
Proposition~\ref{prop:fixed-virtual-optimality} then says that the Nesterov feedback term
\[
u^\star=-k_1\nabla^2f(x_1)x_2-k_2\bigl(x_2+k_1\nabla f(x_1)\bigr)-\nabla f(x_1)
\]
is exactly the unique minimizer of this problem. Thus, once the virtual control has declared that the desired first-step behavior is the gradient manifold $x_2=-k_1\nabla f(x_1)$, the Nesterov correction can be interpreted as the optimal way to drive the full second-order system toward that manifold while minimizing the induced outer cost.
\end{remark}

Proposition~\ref{prop:fixed-virtual-optimality} is therefore only a conditional statement: once $\alpha$ has been fixed, the second-step law is optimal for the induced outer problem. The next question is therefore which part of the backstepping construction still carries the real design freedom. Proposition~\ref{prop:virtual-design-freedom} answers this by showing that the universal second-step law is essentially fixed by the first-step dissipation identity, whereas the nontrivial freedom remains in the choice of the virtual control itself.

\begin{proposition}[Residual freedom in the virtual-control design]\label{prop:virtual-design-freedom}
Suppose that $V_1\in C^1$ and $\alpha\in C^1$ satisfy, for some constants $\sigma>0$ and $\lambda>0$, the first-step identity
\begin{equation}
\begin{aligned}
&\nabla_{x_1}V_1(x_1,z)^\top\bigl(\alpha(x_1,z)+e\bigr)  + \nabla_zV_1(x_1,z)^\top q(x_1,z) \\
&= -\sigma\|\nabla f(x_1)\|^2 + \lambda \nabla f(x_1)^\top e
\end{aligned}
\label{eq:first-step-identity}
\end{equation}
pointwise for all $(x_1,z,e)$. Then
\begin{equation}
\nabla_{x_1}V_1 = \lambda \nabla f(x_1),
\label{eq:value-gradient-match}
\end{equation}
\begin{equation}
\nabla_{x_1}V_1^\top\alpha + \nabla_zV_1^\top q
=
-\sigma\|\nabla f(x_1)\|^2
=
-\frac{\sigma}{\lambda^2}\|\nabla_{x_1}V_1\|^2.
\label{eq:reduced-dissipation}
\end{equation}
Moreover, on the set $\{(x_1,z): \nabla_{x_1}V_1\neq 0\}$, every admissible virtual control admits the decomposition
\begin{equation}
\begin{aligned}
\alpha
&= -\frac{\nabla_zV_1^\top q + \frac{\sigma}{\lambda^2}\|\nabla_{x_1}V_1\|^2}{\|\nabla_{x_1}V_1\|^2}\,\nabla_{x_1}V_1 + \beta, \\
&\qquad \nabla_{x_1}V_1^\top\beta = 0.
\end{aligned}
\label{eq:virtual-control-split}
\end{equation}
\end{proposition}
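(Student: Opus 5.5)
The identity \eqref{eq:first-step-identity} holds for every $e\in\mathbb{R}^n$ at each fixed $(x_1,z)$. The plan is to read it as a polynomial identity in $e$ and compare coefficients. Both sides are affine in $e$: the left-hand side is $\nabla_{x_1}V_1^\top\alpha+\nabla_zV_1^\top q+\nabla_{x_1}V_1^\top e$, and the right-hand side is $-\sigma\|\nabla f(x_1)\|^2+\lambda\nabla f(x_1)^\top e$.

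First I set $e=0$, which gives the $e$-independent part
\[
\nabla_{x_1}V_1^\top\alpha+\nabla_zV_1^\top q=-\sigma\|\nabla f(x_1)\|^2 .
\]
Subtracting this from the full identity leaves $(\nabla_{x_1}V_1-\lambda\nabla f(x_1))^\top e=0$ for all $e$. Choosing $e$ equal to the vector in parentheses (or running $e$ through the standard basis) forces \eqref{eq:value-gradient-match}. Substituting $\nabla f(x_1)=\lambda^{-1}\nabla_{x_1}V_1$ into the constant part then gives the second equality in \eqref{eq:reduced-dissipation}.

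For the decomposition, fix $(x_1,z)$ with $g:=\nabla_{x_1}V_1\neq0$ and split $\alpha$ orthogonally as $\alpha=\frac{g^\top\alpha}{\|g\|^2}g+\beta$, where $\beta:=\alpha-\frac{g^\top\alpha}{\|g\|^2}g$. By construction $g^\top\beta=0$. From \eqref{eq:reduced-dissipation},
\[
g^\top\alpha=-\nabla_zV_1^\top q-\frac{\sigma}{\lambda^2}\|g\|^2 .
\]
Inserting this coefficient yields exactly \eqref{eq:virtual-control-split}.

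None of these steps is a genuine obstacle; the argument is linear algebra. The only point needing care is the justification for separating coefficients. It requires the identity to hold for all $e$ independently of $(x_1,z)$, which is what ``pointwise for all $(x_1,z,e)$'' provides, together with the fact that $V_1$, $\alpha$ and $q$ do not depend on $e$. The restriction to $\{g\neq0\}$ is needed only for the division in \eqref{eq:virtual-control-split}.
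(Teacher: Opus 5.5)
Your proposal is correct and follows the paper's proof essentially step for step: set $e=0$ to isolate the $e$-independent part, deduce $(\nabla_{x_1}V_1-\lambda\nabla f(x_1))^\top e=0$ for all $e$ and test it with $e$ equal to that vector, then substitute $\nabla f(x_1)=\lambda^{-1}\nabla_{x_1}V_1$ and solve for the normal coefficient of $\alpha$. The only cosmetic difference is that you define $\beta$ explicitly as the projection residual $\alpha-\frac{g^\top\alpha}{\|g\|^2}g$. The paper instead posits $\alpha=a\,\nabla_{x_1}V_1+\beta$ with $\nabla_{x_1}V_1^\top\beta=0$ and then solves for $a$.
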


\begin{proof}
Fix $(x_1,z)$ and rewrite \eqref{eq:first-step-identity} as
\[
\begin{aligned}
\bigl(\nabla_{x_1}V_1-\lambda \nabla f(x_1)\bigr)^\top e
&=
-\sigma\|\nabla f(x_1)\|^2 \\
&\quad -\nabla_{x_1}V_1^\top\alpha-\nabla_zV_1^\top q.
\end{aligned}
\]
The right-hand side is independent of $e$. Since the identity holds for every $e$, we may first set $e=0$, which gives
\[
-\sigma\|\nabla f(x_1)\|^2-\nabla_{x_1}V_1^\top\alpha-\nabla_zV_1^\top q=0.
\]
Substituting this back into the previous relation yields
\[
\bigl(\nabla_{x_1}V_1-\lambda \nabla f(x_1)\bigr)^\top e = 0, \qquad \forall e\in\mathbb{R}^n.
\]
We now prove \eqref{eq:value-gradient-match} by contradiction. If $\nabla_{x_1}V_1-\lambda \nabla f(x_1)\neq 0$, choose
\[
e=\nabla_{x_1}V_1-\lambda \nabla f(x_1).
\]
Then
\[
\bigl(\nabla_{x_1}V_1-\lambda \nabla f(x_1)\bigr)^\top e
=
\|\nabla_{x_1}V_1-\lambda \nabla f(x_1)\|^2
>
0,
\]
This contradiction proves \eqref{eq:value-gradient-match}.

Substituting \eqref{eq:value-gradient-match} into the $e=0$ relation gives
\[
\nabla_{x_1}V_1^\top\alpha+\nabla_zV_1^\top q
=
-\sigma\|\nabla f(x_1)\|^2.
\]
Using $\nabla f(x_1)=\lambda^{-1}\nabla_{x_1}V_1$ then yields \eqref{eq:reduced-dissipation}.

Finally, on the set $\{\nabla_{x_1}V_1\neq 0\}$, decompose $\alpha$ as
\[
\alpha=a\,\nabla_{x_1}V_1+\beta,
\qquad
\nabla_{x_1}V_1^\top\beta=0.
\]
Substituting this into \eqref{eq:reduced-dissipation} gives
\[
a\|\nabla_{x_1}V_1\|^2+\nabla_zV_1^\top q
=
-\frac{\sigma}{\lambda^2}\|\nabla_{x_1}V_1\|^2,
\]
so
\[
a
=
-\frac{\nabla_zV_1^\top q+\frac{\sigma}{\lambda^2}\|\nabla_{x_1}V_1\|^2}{\|\nabla_{x_1}V_1\|^2}.
\]
Substituting this expression for $a$ back into $\alpha=a\,\nabla_{x_1}V_1+\beta$ gives \eqref{eq:virtual-control-split}.
\end{proof}

\begin{figure}[!t]
\centering
\begin{tikzpicture}[scale=0.95, line cap=round, line join=round, >=stealth]
  \coordinate (p) at (1.55,0.75);
  \coordinate (gradend) at (2.70,1.33);
  \coordinate (parend) at (0.63,0.29);
  \coordinate (betaend) at (0.97,1.90);
  \coordinate (alphaend) at (0.05,1.44);

  \shade[inner color=white, outer color=orange!22] (0,0) ellipse (2.35 and 1.45);
  \draw[gray!55] (0,0) ellipse (0.95 and 0.58);
  \draw[gray!60] (0,0) ellipse (1.60 and 0.98);
  \draw[gray!70, thick] (0,0) ellipse (2.25 and 1.35);
  \node[gray!75] at (-1.60,1.82) {$V_1=\mathrm{const.}$};
  \node[orange!80!black] at (1.95,1.82) {larger $V_1$};

  \fill (p) circle (1.2pt);
  \node[below right] at (p) {$x_1$};

  \draw[->, blue!80!black, thick] (p) -- (gradend)
    node[pos=1, right] {$\nabla_{x_1}V_1$};

  \draw[->, orange!90!black, thick] (p) -- (parend)
    node[pos=0.55, below] {$a\,\nabla_{x_1}V_1$};

  \draw[->, teal!70!black, thick] (p) -- (betaend)
    node[pos=0.65, above left] {$\beta$};

  \draw[black, very thick]
    (1.826,0.892) -- (1.687,1.168) -- (1.411,1.026);

  \draw[dashed, gray!70] (betaend) -- (alphaend);
  \draw[dashed, gray!70] (parend) -- (alphaend);

  \draw[->, red!80!black, very thick] (p) -- (alphaend)
    node[pos=0.75, above] {$\alpha$};

  \node[align=left, font=\scriptsize] at (-1.45,-1.75)
    {Normal direction fixed by dissipation\\Tangential direction free};
\end{tikzpicture}
\caption{Geometric interpretation of Proposition~\ref{prop:virtual-design-freedom}. The value of $V_1$ increases outward across its level sets, so $\nabla_{x_1}V_1$ points in the outward normal direction. The dissipation identity fixes the component of $\alpha$ parallel to $\nabla_{x_1}V_1$, while the orthogonal component $\beta$ remains free.}
\label{fig:virtual-control-split-geometry}
\end{figure}

\begin{remark}
Equation~\eqref{eq:virtual-control-split} shows that, once $V_1$ is fixed, the dissipation requirement determines only the component of $\alpha$ parallel to $\nabla_{x_1}V_1$. The tangential component $\beta$ is free at first order. Figure~\ref{fig:virtual-control-split-geometry} visualizes this decomposition: $\nabla_{x_1}V_1$ is normal to a level set of $V_1$, the constrained part of $\alpha$ lies along this normal direction, and $\beta$ is the tangential residual freedom. Different choices of $\beta$ generate different manifolds $\mathcal M_\alpha$, different errors $e=x_2-\alpha$, different feedforward terms $\dot\alpha$, and therefore different induced performance indices $J_T^\alpha$. Hence the inverse optimality of $u^\star$ does not rank different backstepping designs; it is conditional on the chosen virtual control.
\end{remark}

A useful special case occurs when the auxiliary contribution vanishes, i.e. when $\nabla_zV_1^\top q=0$, and when $\beta=0$. Then \eqref{eq:virtual-control-split} reduces to
\[
\alpha = -\frac{\sigma}{\lambda^2}\nabla_{x_1}V_1 = -\frac{\sigma}{\lambda}\nabla f(x_1),
\]
which is a negative-gradient virtual control because $\lambda>0$. In particular, in the standard strict-feedback case with $V_1(x_1)=\phi(x_1)$, one has $\nabla_{x_1}V_1=\nabla f(x_1)$ and hence $\lambda=1$, so $\beta=0$ gives
\[
\alpha=-\sigma\nabla f(x_1).
\]
This is exactly the gradient-type virtual control used in Theorem~\ref{thm:nesterov-flow}, with $\sigma=k_1$.

Propositions~\ref{prop:fixed-virtual-optimality} and \ref{prop:virtual-design-freedom} therefore show that, once a virtual control $\alpha$ has been chosen, the universal second-step law is already optimally determined for the induced outer problem, whereas the genuine design freedom lies in the choice of $\alpha$ itself.
If one further requires the virtual control $\alpha$ to be optimal for a suitable reduced problem, then the entire backstepping construction acquires a genuine two-layer optimality interpretation.
Figure~\ref{fig:two-layer-optimality-flow} summarizes this hierarchy.

\begin{figure}[!t]
\centering
\begin{tikzpicture}[
  >=Stealth,
  box/.style={draw, rounded corners=4pt, align=center, text width=0.84\linewidth, minimum height=1.7cm, inner sep=7pt},
  arrow/.style={->, thick, draw=black!75},
  flowlabel/.style={font=\scriptsize, align=center, text=black!70}
]
\node[box, fill=orange!12] (v1) {\textbf{Reduced value function}\\[3pt]
$V_1^\star$ satisfies the gradient match
$\nabla_{x_1}V_1^\star=\lambda \nabla f(x_1)$
and solves the reduced HJB equation.};

\node[box, fill=green!12, below=0.45cm of v1] (alpha) {\textbf{Virtual-control stage}\\[3pt]
$\alpha^\star$ is optimal for the reduced Bolza problem.\\[3pt]
\emph{Proposition~\ref{prop:virtual-design-freedom}:} the genuine design freedom lies here.};

\node[box, fill=blue!10, below=0.45cm of alpha] (u) {\textbf{Actual-control stage}\\[3pt]
For fixed $\alpha^\star$,
\[
u^\star=\dot\alpha^\star-ce-\lambda \nabla f(x_1)
\]
is the unique minimizer of the induced outer Bolza problem.\\[3pt]
\emph{Proposition~\ref{prop:fixed-virtual-optimality}}.};

\node[box, fill=red!10, below=0.45cm of u] (flow) {\textbf{Two-layer optimality}\\[3pt]
Reduced optimality of $\alpha^\star$ together with outer inverse optimality of $u^\star$ yields the final optimal backstepping interpretation and gradient convergence.};

\draw[arrow] (v1) -- node[flowlabel, right] {Theorem~\ref{thm:optimal-backstepping}\\ certifies the optimal $\alpha^\star$} (alpha);
\draw[arrow] (alpha) -- node[flowlabel, right] {fix $\alpha^\star$ and induce\\ the outer tracking problem} (u);
\draw[arrow] (u) -- node[flowlabel, right] {combine the two layers} (flow);
\end{tikzpicture}
\caption{Two-layer optimal backstepping design for gradient flow. The reduced value function $V_1^\star$ does not by itself constitute an ``optimality'' claim; rather, by solving the reduced Hamilton--Jacobi--Bellman equation, it certifies that $\alpha^\star$ is optimal for the reduced problem. Once $\alpha^\star$ is fixed, Proposition~\ref{prop:fixed-virtual-optimality} identifies the optimal actual control for the induced outer problem, and the two layers combine into the optimal backstepping interpretation of Theorem~\ref{thm:optimal-backstepping}.}
\label{fig:two-layer-optimality-flow}
\end{figure}

\begin{theorem}[Optimal backstepping principle]\label{thm:optimal-backstepping}
Consider the augmented system \eqref{eq:augmented-strict-feedback} and a prescribed reduced running cost $L_r : \mathbb{R}^n \times \mathbb{R}^m \times \mathbb{R}^n \to \mathbb{R}_{\ge 0}$.
Assume that there exist a continuously differentiable reduced value function $V_1^\star(x_1,z)$, a continuously differentiable virtual control $\alpha^\star(x_1,z)$, and constants $c_\phi>0$, $c_z>0$, $\lambda>0$, and $\sigma>0$ such that
\begin{equation}
V_1^\star(x_1,z) \ge c_\phi\phi(x_1) + c_z\|z\|^2,
\label{eq:optimal-storage-bound}
\end{equation}
\begin{equation}
\nabla_{x_1}V_1^\star(x_1,z) = \lambda \nabla f(x_1),
\label{eq:optimal-gradient-match}
\end{equation}
and the stationary Hamilton--Jacobi--Bellman equation for the reduced system
\begin{equation}
\dot x_1 = a, \qquad \dot z = q(x_1,z),
\label{eq:reduced-system}
\end{equation}
namely
\begin{equation}
\begin{aligned}
0 = \min_{a\in\mathbb{R}^n}\Bigl\{&\nabla_{x_1}V_1^\star(x_1,z)^\top a \\
&+ \nabla_zV_1^\star(x_1,z)^\top q(x_1,z) + L_r(x_1,z,a)\Bigr\},
\end{aligned}
\label{eq:reduced-hjb}
\end{equation}
holds for all $(x_1,z)$, with minimizer $a=\alpha^\star(x_1,z)$. Assume in addition that
\begin{equation}
L_r\bigl(x_1,z,\alpha^\star(x_1,z)\bigr) \ge \sigma\|\nabla f(x_1)\|^2
\label{eq:reduced-cost-lower-bound}
\end{equation}
for all $(x_1,z)$, and that boundedness of $(\nabla f(x_1),z)$ implies boundedness of $\alpha^\star(x_1,z)$.

Define $e := x_2 - \alpha^\star(x_1,z)$, and choose the actual control
\begin{equation}
u^\star := \dot\alpha^\star - ce - \lambda \nabla f(x_1), \qquad c>0,
\label{eq:optimal-actual-control}
\end{equation}
where $\dot\alpha^\star$ denotes the total derivative of $\alpha^\star$ along \eqref{eq:augmented-strict-feedback}. Then the following statements hold.

\begin{enumerate}
\item For every $T>0$, $\alpha^\star$ minimizes the reduced Bolza functional
\[
\mathcal J_{r,T}(a)
:=
V_1^\star\bigl(x_1(T),z(T)\bigr)
+
\int_0^T L_r(x_1,z,a)\,dt
\]
over all admissible reduced controls $a(\cdot)$ for the system \eqref{eq:reduced-system}.

\item The full-order storage function
$
V^\star := V_1^\star + \frac{1}{2}\|e\|^2
$
satisfies
\begin{equation}
\dot V^\star = -L_r\bigl(x_1,z,\alpha^\star\bigr) - c\|e\|^2 \le -\sigma\|\nabla f(x_1)\|^2 - c\|e\|^2.
\label{eq:optimal-full-dissipation}
\end{equation}
Consequently,
\begin{equation}
\nabla f(x_1(t)) \to 0 \qquad \text{as } t\to\infty.
\label{eq:optimal-gradient-convergence}
\end{equation}

\item For fixed $\alpha^\star$, the actual control $u^\star$ is the unique minimizer, on every finite horizon $[0,T]$, of the outer Bolza functional
\[
\begin{aligned}
\mathcal J_{o,T}^{\alpha^\star}(u)
&:= V^\star\bigl(x_1(T),z(T),e(T)\bigr) \\
&\quad + \int_0^T \Bigl[
L_r\bigl(x_1,z,\alpha^\star\bigr) \\
&\qquad + \frac{c}{2}\|e\|^2
+ \frac{1}{2c}\|u-\dot\alpha^\star+\lambda \nabla f(x_1)\|^2
\Bigr]dt.
\end{aligned}
\]
Therefore the resulting design has a two-layer interpretation: $\alpha^\star$ is optimal for a prescribed reduced problem, whereas $u^\star$ is inverse optimal for the induced outer tracking problem.
\end{enumerate}
\end{theorem}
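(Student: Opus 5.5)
The plan is to prove the three items in order, each by reducing it to an earlier result or to a verification argument.

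\emph{Item 1.} I would run the standard HJB verification argument. Take any admissible reduced control $a(\cdot)$ on $[0,T]$. Along the corresponding trajectory of \eqref{eq:reduced-system}, differentiate $V_1^\star$ and use \eqref{eq:reduced-hjb}, in which the minimum is zero:
\[
\frac{d}{dt}V_1^\star + L_r(x_1,z,a) = \nabla_{x_1}V_1^{\star\top} a + \nabla_zV_1^{\star\top} q + L_r(x_1,z,a) \ge 0 .
\]
Equality holds when $a=\alpha^\star(x_1,z)$. Integrating over $[0,T]$ gives $\mathcal J_{r,T}(a)\ge V_1^\star(x_1(0),z(0))$, and this lower bound is attained by the feedback $a=\alpha^\star$. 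Hence $\alpha^\star$ is a minimizer.

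\emph{Item 2.} The key step is to recover the first-step identity \eqref{eq:first-step-dissipation}, with $\sigma$ replaced by the state-dependent cost. Along $\dot x_1=\alpha^\star+e$, the gradient match \eqref{eq:optimal-gradient-match} together with the HJB equality at $a=\alpha^\star$ gives
\[
\dot V_1^\star = \nabla_{x_1}V_1^{\star\top}\alpha^\star + \nabla_zV_1^{\star\top}q + \lambda\nabla f(x_1)^\top e = -L_r(x_1,z,\alpha^\star) + \lambda \nabla f(x_1)^\top e .
\]
Under \eqref{eq:optimal-actual-control} we have $\dot e = -ce - \lambda\nabla f(x_1)$. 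The cross terms then cancel exactly as in the proof of Theorem~\ref{thm:general-backstepping}, which yields the equality in \eqref{eq:optimal-full-dissipation}. The inequality then follows from \eqref{eq:reduced-cost-lower-bound}.

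To obtain gradient convergence I would repeat the tail of the proof of Theorem~\ref{thm:general-backstepping}, noting that only the inequality form of the dissipation is used there:
\begin{itemize}
\item $V^\star$ is nonincreasing and bounded below;
\item \eqref{eq:optimal-storage-bound} then bounds $\phi(x_1)$, $z$ and $e$;
\item Lemma~\ref{lem:gradient-suboptimality} bounds $\nabla f(x_1)$;
\item the boundedness hypothesis on $\alpha^\star$ bounds $\alpha^\star$, and hence $x_2 = e+\alpha^\star$;
\item integrating \eqref{eq:optimal-full-dissipation} gives $\int_0^\infty \sigma\|\nabla f\|^2\,dt \le V^\star(0)$;
\item Lemma~\ref{lem:gradient-vanishing} concludes.
\end{itemize}
The main subtlety is that \eqref{eq:first-step-dissipation} now holds only as an inequality, so Theorem~\ref{thm:general-backstepping} cannot be quoted verbatim; the argument has to be rerun in inequality form.

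\emph{Item 3.} I would mimic Proposition~\ref{prop:fixed-virtual-optimality} with $\sigma\|\nabla f\|^2$ replaced by $L_r(x_1,z,\alpha^\star)$. For arbitrary admissible $u$, the computation above gives
\[
\dot V^\star = -L_r(x_1,z,\alpha^\star) + e^\top\bigl(u-\dot\alpha^\star+\lambda\nabla f(x_1)\bigr).
\]
Write $w := u-\dot\alpha^\star+\lambda\nabla f(x_1)$. Completing the square gives
\[
\dot V^\star + L_r + \tfrac{c}{2}\|e\|^2 + \tfrac{1}{2c}\|w\|^2 = \tfrac{1}{2c}\|w+ce\|^2 \ge 0 .
\]
Integrating gives $\mathcal J^{\alpha^\star}_{o,T}(u)\ge V^\star(0)$, with equality if and only if $w+ce\equiv 0$, that is, $u=u^\star$. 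Uniqueness in the sense of admissible controls (almost everywhere) follows as in Proposition~\ref{prop:fixed-virtual-optimality}.

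I expect little real obstacle. The most delicate points are the admissibility and regularity conventions (absolutely continuous trajectories, so that integrating the derivative identities is justified) and checking that the HJB equality at $\alpha^\star$ is used in item 2, not merely the inequality.
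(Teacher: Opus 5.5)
Your proposal is correct and follows essentially the same route as the paper's proof. Item~1 is the HJB verification inequality integrated over $[0,T]$. Item~2 uses the gradient match plus the HJB equality at $a=\alpha^\star$ to get $\dot V_1^\star=-L_r(x_1,z,\alpha^\star)+\lambda\nabla f(x_1)^\top e$, and then reruns (rather than cites) the boundedness and Lemma~\ref{lem:gradient-vanishing} argument, just as the paper does. Item~3 is the square completion $\dot V^\star+L_r+\tfrac{c}{2}\|e\|^2+\tfrac{1}{2c}\|w\|^2=\tfrac{1}{2c}\|w+ce\|^2$, followed by integration over $[0,T]$.
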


\begin{proof}
By \eqref{eq:reduced-hjb}, for every admissible reduced control $a$ one has
\[
\nabla_{x_1}V_1^\star{}^\top a + \nabla_zV_1^\star{}^\top q + L_r(x_1,z,a) \ge 0,
\]
with equality at $a=\alpha^\star$. Along the reduced system \eqref{eq:reduced-system}, this yields $\dot V_1^\star + L_r(x_1,z,a) \ge 0$. Integration on $[0,T]$ therefore gives
\[
\mathcal J_{r,T}(a) \ge V_1^\star\bigl(x_1(0),z(0)\bigr),
\]
with equality for $a=\alpha^\star$, which proves item 1.

Next, along the full-order system with the control \eqref{eq:optimal-actual-control}, one has
\[
\begin{aligned}
\dot V_1^\star
&=
\nabla_{x_1}V_1^\star{}^\top\bigl(\alpha^\star+e\bigr)
+ \nabla_zV_1^\star{}^\top q \\
&=
- L_r\bigl(x_1,z,\alpha^\star\bigr) + \lambda \nabla f(x_1)^\top e,
\end{aligned}
\]
where the last identity follows from \eqref{eq:reduced-hjb} evaluated at the minimizing virtual control $a=\alpha^\star$ together with \eqref{eq:optimal-gradient-match}. Since $\dot e = u^\star - \dot\alpha^\star = -ce-\lambda \nabla f(x_1)$, it follows that
\[
\dot V^\star = \dot V_1^\star + e^\top\dot e = -L_r\bigl(x_1,z,\alpha^\star\bigr) - c\|e\|^2,
\]
which proves the equality in \eqref{eq:optimal-full-dissipation}. By \eqref{eq:optimal-storage-bound}, $V^\star(t)$ is bounded from below and nonincreasing, hence $\phi(x_1(t))$, $z(t)$, and $e(t)$ are bounded. Lemma~\ref{lem:gradient-suboptimality} yields boundedness of $\nabla f(x_1(t))$, and the assumed boundedness implication then gives boundedness of $\alpha^\star(x_1(t),z(t))$. Therefore $x_2(t)=e(t)+\alpha^\star(x_1(t),z(t))$ is bounded. Moreover, integrating \eqref{eq:optimal-full-dissipation} and using \eqref{eq:reduced-cost-lower-bound} gives
\[
\int_0^\infty \|\nabla f(x_1(t))\|^2\,dt \le \frac{V^\star(0)-\inf V^\star}{\sigma} < \infty.
\]
Lemma~\ref{lem:gradient-vanishing} therefore yields \eqref{eq:optimal-gradient-convergence}, which proves item 2.

Finally, fix $\alpha^\star$ and let $u$ be arbitrary. Then
\[
\dot V^\star
=
- L_r\bigl(x_1,z,\alpha^\star\bigr)
+ e^\top\bigl(u-\dot\alpha^\star+\lambda \nabla f(x_1)\bigr).
\]
Hence
\[
\begin{aligned}
\dot V^\star
&+ L_r\bigl(x_1,z,\alpha^\star\bigr)
+ \frac{c}{2}\|e\|^2 \\
&+ \frac{1}{2c}\|u-\dot\alpha^\star+\lambda \nabla f(x_1)\|^2 \\
&= \frac{1}{2c}\|u-\dot\alpha^\star+\lambda \nabla f(x_1) + ce\|^2 \ge 0.
\end{aligned}
\]
Integration on $[0,T]$ yields $\mathcal J_{o,T}^{\alpha^\star}(u) \ge V^\star(0)$, and equality holds if and only if $u=u^\star$. This proves item 3.
\end{proof}

\section{Examples}\label{sec:examples}

This section illustrates how Theorem~\ref{thm:optimal-backstepping} generates concrete optimization dynamics from the two-layer optimality principle. We begin with a fully explicit quadratic benchmark, then show how strong convexity sharpens the conclusion to exponential convergence, and finally return to a more general augmented example that works for an arbitrary convex $L$-smooth objective without prescribing the detailed form of $f$.

\subsection*{Case 1: Quadratic Objective}

Theorem~\ref{thm:optimal-backstepping} admits a particularly transparent test example in the standard strict-feedback case with no auxiliary state and quadratic objective
\[
f(x_1)=\frac{1}{2}\|x_1\|^2,
\qquad
f^\star=0,
\qquad
\nabla f(x_1)=x_1.
\]
Fix a constant $k>0$ and consider the reduced running cost
\[
L_r(x_1,a):=\frac{1}{2k}\|a\|^2+\frac{k}{2}\|x_1\|^2.
\]
Choose
\[
V_1^\star(x_1)=\phi(x_1)=\frac{1}{2}\|x_1\|^2.
\]
Then $\nabla_{x_1}V_1^\star=\nabla f(x_1)$, so $\lambda=1$, and the reduced Hamilton--Jacobi--Bellman equation becomes
\[
0=\min_{a\in\mathbb{R}^n}
\left\{
x_1^\top a+\frac{1}{2k}\|a\|^2+\frac{k}{2}\|x_1\|^2
\right\}.
\]
Its unique minimizer is
\[
\alpha^\star(x_1)=-k x_1=-k\nabla f(x_1).
\]
Hence the first layer of optimality is explicit: $\alpha^\star$ minimizes the reduced Bolza functional
\[
\mathcal J_{r,T}(a)
=
\frac{1}{2}\|x_1(T)\|^2
+
\int_0^T\left(
\frac{1}{2k}\|a\|^2+\frac{k}{2}\|x_1\|^2
\right)dt.
\]

Now define the backstepping error
\[
e=x_2-\alpha^\star(x_1)=x_2+kx_1.
\]
Since $\dot\alpha^\star=-k x_2$, Theorem~\ref{thm:optimal-backstepping} gives the actual control
\[
\begin{aligned}
u^\star
&=\dot\alpha^\star-ce-\nabla f(x_1) \\
&=-k x_2-c(x_2+kx_1)-x_1 \\
&=-(k+c)x_2-(1+ck)x_1.
\end{aligned}
\]
Therefore the final optimization dynamics are
\[
\dot x_1=x_2,
\qquad
\dot x_2=-(k+c)x_2-(1+ck)x_1.
\]
This algorithm is genuinely produced by the two-layer principle: the virtual control $\alpha^\star=-k\nabla f(x_1)$ is optimal for the reduced problem, and then the actual control $u^\star$ is inverse optimal for the induced outer problem. Indeed,
\[
L_r\bigl(x_1,\alpha^\star(x_1)\bigr)=k\|x_1\|^2=k\|\nabla f(x_1)\|^2,
\]
so Theorem~\ref{thm:optimal-backstepping} yields the Lyapunov identity
\[
\begin{aligned}
V^\star
&=
\frac{1}{2}\|x_1\|^2+\frac{1}{2}\|x_2+kx_1\|^2, \\
\dot V^\star
&=
-k\|x_1\|^2-c\|x_2+kx_1\|^2.
\end{aligned}
\]
Thus this quadratic system is a clean benchmark for verifying the two-layer optimality interpretation. In the scalar case with $k=1$ and $c=1$, one obtains the explicit algorithm
\[
\dot x_1=x_2,
\qquad
\dot x_2=-2x_2-2x_1.
\]

\subsection*{Case 2: Strongly Convex Objectives}

Suppose now that $f$ is in addition $m$-strongly convex with $m>0$. Then
\[
\|\nabla f(x_1)\|^2 \ge 2m\phi(x_1),
\]
so the two-layer construction can be specialized to yield an exponentially stable optimization flow. Consider the standard strict-feedback case without auxiliary state, and choose the reduced running cost
\[
\begin{aligned}
L_r(x_1,a)
&:=
\frac{1}{2k}\|a\|^2+\frac{k}{2}\|\nabla f(x_1)\|^2+\mu\phi(x_1), \\
&\qquad k>0,\ \mu>0.
\end{aligned}
\]
Take
\[
V_1^\star(x_1)=\phi(x_1).
\]
Then $\nabla_{x_1}V_1^\star=\nabla f(x_1)$, so $\lambda=1$, and the reduced Hamilton--Jacobi--Bellman equation becomes
\[
\begin{aligned}
0=\min_{a\in\mathbb{R}^n}
\Bigl\{
&\nabla f(x_1)^\top a+\frac{1}{2k}\|a\|^2 \\
&+\frac{k}{2}\|\nabla f(x_1)\|^2+\mu\phi(x_1)
\Bigr\}.
\end{aligned}
\]
Its unique minimizer is
\[
\alpha^\star(x_1)=-k\nabla f(x_1).
\]
Hence the first layer of optimality is again explicit: $\alpha^\star$ is optimal for the reduced Bolza problem induced by $L_r$.

Defining
\[
e=x_2-\alpha^\star(x_1)=x_2+k\nabla f(x_1),
\]
Theorem~\ref{thm:optimal-backstepping} yields the actual control
\[
\begin{aligned}
u^\star
&=\dot\alpha^\star-ce-\nabla f(x_1) \\
&=-k\nabla^2 f(x_1)x_2-c\bigl(x_2+k\nabla f(x_1)\bigr)-\nabla f(x_1) \\
&=-\bigl(k\nabla^2 f(x_1)+cI\bigr)x_2-(1+ck)\nabla f(x_1).
\end{aligned}
\]
Therefore the resulting two-layer optimality algorithm is
\[
\dot x_1=x_2,
\qquad
\dot x_2=-\bigl(k\nabla^2 f(x_1)+cI\bigr)x_2-(1+ck)\nabla f(x_1).
\]
It is generated by the same two-layer mechanism as before: $\alpha^\star=-k\nabla f(x_1)$ is reduced-optimal, and $u^\star$ is outer inverse-optimal for the induced tracking problem.

Moreover,
\[
L_r\bigl(x_1,\alpha^\star(x_1)\bigr)=k\|\nabla f(x_1)\|^2+\mu\phi(x_1),
\]
so Theorem~\ref{thm:optimal-backstepping} gives
\[
\begin{aligned}
\dot V^\star&=-k\|\nabla f(x_1)\|^2-\mu\phi(x_1)-c\|e\|^2, \\
V^\star&=\phi(x_1)+\frac{1}{2}\|e\|^2.
\end{aligned}
\]
Using strong convexity,
\[
\dot V^\star
\le
-(2mk+\mu)\phi(x_1)-c\|e\|^2
\le
-\min\{2mk+\mu,\,2c\}V^\star.
\]
Hence $V^\star$ decays exponentially, and therefore both $\phi(x_1(t))$ and the tracking error $e(t)=x_2(t)+k\nabla f(x_1(t))$ converge to zero exponentially fast. This provides a clean strong-convexity example in which the framework yields not only a two-layer optimality interpretation, but also an explicit exponential convergence estimate.

\subsection*{Case 3: General Convex Smooth Objectives}

The previous two cases either fix the objective explicitly or add strong convexity. To test the genuine scope of Theorem~\ref{thm:optimal-backstepping}, it is more informative to consider an example that works for an arbitrary objective satisfying Assumption~\ref{asm:convex-smooth-objective}. Introduce the auxiliary state
\[
\dot z=-\rho z+\nabla f(x_1),
\qquad
\rho>0,
\]
and consider the reduced system
\[
\dot x_1=a,
\qquad
\dot z=-\rho z+\nabla f(x_1),
\]
where $a$ is the virtual control. Fix constants $k_d>0$ and $k_i>0$, and choose
\[
V_1^\star(x_1,z):=\phi(x_1)+\frac{k_i}{2}\|z\|^2.
\]
Then
\[
\nabla_{x_1}V_1^\star=\nabla f(x_1),
\qquad
\nabla_zV_1^\star=k_i z,
\]
so the gradient-matching condition of Theorem~\ref{thm:optimal-backstepping} holds with $\lambda=1$. Now define the reduced running cost
\[
L_r(x_1,z,a)
:=
\frac{1}{2k_d}\|a+k_i z\|^2
+
\frac{k_d}{2}\|\nabla f(x_1)\|^2
+
k_i\rho\|z\|^2.
\]
Then the reduced Hamilton--Jacobi--Bellman expression becomes
\[
\begin{aligned}
&\nabla_{x_1}V_1^\star{}^\top a
+
\nabla_zV_1^\star{}^\top\bigl(-\rho z+\nabla f(x_1)\bigr)
+
L_r(x_1,z,a) \\
&=
\nabla f(x_1)^\top(a+k_i z)
+
\frac{1}{2k_d}\|a+k_i z\|^2
+
\frac{k_d}{2}\|\nabla f(x_1)\|^2.
\end{aligned}
\]
Hence the unique minimizer is
\[
\alpha^\star(x_1,z)=-k_d\nabla f(x_1)-k_i z.
\]
Therefore the first layer of optimality is completely explicit even though the objective $f$ itself has not been specified beyond convexity and smoothness.

Now define
\[
e=x_2-\alpha^\star(x_1,z)=x_2+k_d\nabla f(x_1)+k_i z.
\]
Since
\[
\begin{aligned}
\dot\alpha^\star
&=-k_d\nabla^2 f(x_1)x_2-k_i\dot z \\
&=-k_d\nabla^2 f(x_1)x_2+k_i\rho z-k_i\nabla f(x_1),
\end{aligned}
\]
Theorem~\ref{thm:optimal-backstepping} yields the actual control
\[
\begin{aligned}
u^\star
&=\dot\alpha^\star-ce-\nabla f(x_1) \\
&=-k_d\nabla^2 f(x_1)x_2-cx_2 \\
&\quad -(1+k_i+ck_d)\nabla f(x_1)+k_i(\rho-c)z.
\end{aligned}
\]
The resulting optimization dynamics are
\[
\begin{aligned}
\dot x_1&=x_2, \\
\dot x_2&=-k_d\nabla^2 f(x_1)x_2-cx_2 \\
&\quad -(1+k_i+ck_d)\nabla f(x_1)+k_i(\rho-c)z, \\
\dot z&=-\rho z+\nabla f(x_1).
\end{aligned}
\]
This is a genuinely two-layer construction for an arbitrary convex $L$-smooth objective. The virtual control
\[
\alpha^\star=-k_d\nabla f(x_1)-k_i z
\]
is optimal for the reduced problem generated by $L_r$, and then the actual control $u^\star$ is inverse optimal for the induced outer problem. Moreover,
\[
\begin{aligned}
L_r\bigl(x_1,z,\alpha^\star(x_1,z)\bigr)
&=
k_d\|\nabla f(x_1)\|^2+k_i\rho\|z\|^2 \\
&\ge
k_d\|\nabla f(x_1)\|^2,
\end{aligned}
\]
so Theorem~\ref{thm:optimal-backstepping} gives the dissipation identity
\[
\begin{aligned}
\dot V^\star
&=
-k_d\|\nabla f(x_1)\|^2-k_i\rho\|z\|^2-c\|e\|^2, \\
V^\star
&=
\phi(x_1)+\frac{k_i}{2}\|z\|^2+\frac{1}{2}\|e\|^2.
\end{aligned}
\]
In this sense, the example is both more complex and more general than the quadratic benchmark: it does not presuppose a particular objective function, yet it still yields an explicit algorithm from the two-layer optimality principle.

\section{Numerical Experiments}
\label{sec:experiments}

To validate the theoretical findings and evaluate the performance of the proposed backstepping-based optimization algorithms, we conduct numerical simulations for both strongly convex and general convex objective functions. For each scenario, we select five representative objective functions and solve the corresponding continuous-time gradient flows derived in Section V. To illustrate the convergence behavior, we plot the trajectory of the gradient norm $\|\nabla f(x_1(t))\|$ over time for each test case.

\subsection{Experiments for Case 2: Strongly Convex Objectives}

For the strongly convex setting, the backstepping framework yields the exponentially stable second-order dynamics given by:
\begin{align*}
    \dot{x}_1 = x_2, \quad \dot{x}_2 = -(k\nabla^2 f(x_1) + cI)x_2 - (1+ck)\nabla f(x_1).
\end{align*}
To demonstrate the robustness of this algorithm, we select the following five $L$-smooth and $m$-strongly convex objective functions $f: \mathbb{R}^2 \to \mathbb{R}$:
\begin{itemize}
    \item \textbf{F1 (Anisotropic Quadratic):} $f(x) = \frac{1}{2}x_1^2 + 5x_2^2$. A standard benchmark for linear convergence rates.
    \item \textbf{F2 (Shifted Quadratic):} $f(x) = (x_1 - 2)^2 + (x_2 + 2)^2$. Tests convergence to a non-origin minimizer.
    \item \textbf{F3 (Regularized Quartic):} $f(x) = \frac{1}{2}(x_1^2 + x_2^2) + \frac{1}{10}(x_1^4 + x_2^4)$. Introduces polynomial nonlinearity while maintaining strong convexity.
    \item \textbf{F4 (Hyperbolic Cosine):} $f(x) = \cosh(x_1) + \cosh(x_2) + \frac{1}{2}(x_1^2 + x_2^2)$. Features exponential growth in the gradient.
    \item \textbf{F5 (Regularized Pseudo-Huber):} $f(x) = \sqrt{1+x_1^2} + \sqrt{1+x_2^2} + \frac{1}{10}(x_1^2 + x_2^2)$. A smooth approximation of the $L_1$ norm with an added $L_2$ penalty to strictly satisfy strong convexity.
\end{itemize}

\begin{figure}[htbp]
    \centering
    \includegraphics[width=\linewidth]{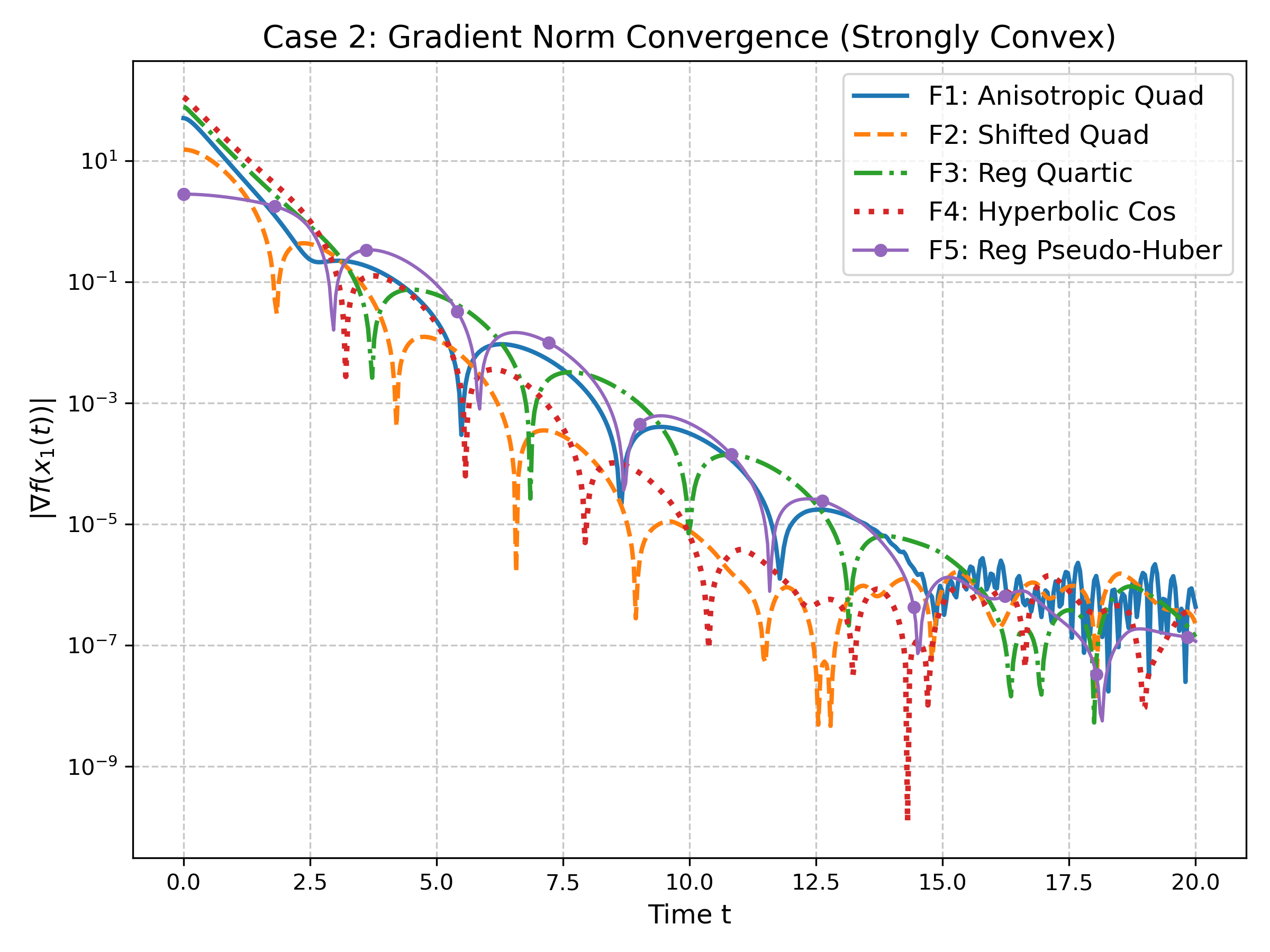}
    \caption{Evolution of the gradient norm $\|\nabla f(x_1(t))\|$ over time for five representative strongly convex objective functions (Case 2). The logarithmic scale illustrates the exponential decay (linear convergence rate) achieved by the proposed strict-feedback backstepping dynamics.}
    \label{fig:case2}
\end{figure}

In our simulations, we set the design parameters to $k = 1.0$ and $c = 1.0$, and initialize the states at $x_1(0) = [5, 5]^\top$ and $x_2(0) = [0, 0]^\top$. The experimental results, displaying the evolution of the gradient norm, are presented in Figure \ref{fig:case2}. Consistent with theoretical expectations, all five functions exhibit a linear convergence rate (exponential decay in time), successfully driving the gradient norm to zero.

\subsection{Experiments for Case 3: General Convex Smooth Objectives}

For the general convex and smooth setting, we evaluate the augmented backstepping dynamics involving the auxiliary state $z$:
\begin{align*}
    \dot{x}_1 =& x_2,\\
    \dot{x}_2 =& -k_d \nabla^2 f(x_1) x_2 - c x_2 - (1+k_i+c k_d)\nabla f(x_1) \\
    &+ k_i(\rho - c)z, \\
    \dot{z} =& -\rho z + \nabla f(x_1).
\end{align*}
We deliberately select five functions that lack strong convexity or possess degenerate directions:
\begin{itemize}
    \item \textbf{G1 (Pure Pseudo-Huber):} $f(x) = \sqrt{1+x_1^2+x_2^2} - 1$. The Hessian diminishes for large $\|x\|$, lacking global strong convexity.
    \item \textbf{G2 (Softplus/Logistic Loss):} $f(x) = \ln(e^{x_1} + e^{-x_1}) + \ln(e^{x_2} + e^{-x_2})$. Flat regions in the tails slow down standard gradient methods.
    \item \textbf{G3 (Pure Quartic):} $f(x) = \frac{1}{4}(x_1^4 + x_2^4)$. The Hessian vanishes exactly at the optimum $x^* = [0,0]^\top$.
    \item \textbf{G4 (Degenerate Quadratic):} $f(x) = \frac{1}{2}(x_1 - x_2)^2$. The minimizer is not a single point but a line $x_1 = x_2$, representing an ill-conditioned scenario.
    \item \textbf{G5 (Coupled Pseudo-Huber):} $f(x) = \sqrt{1 + (x_1+x_2)^2} - 1$. Introduces coupling between the variables without strict convexity.
\end{itemize}


\begin{figure}[htbp]
    \centering
    \includegraphics[width=\linewidth]{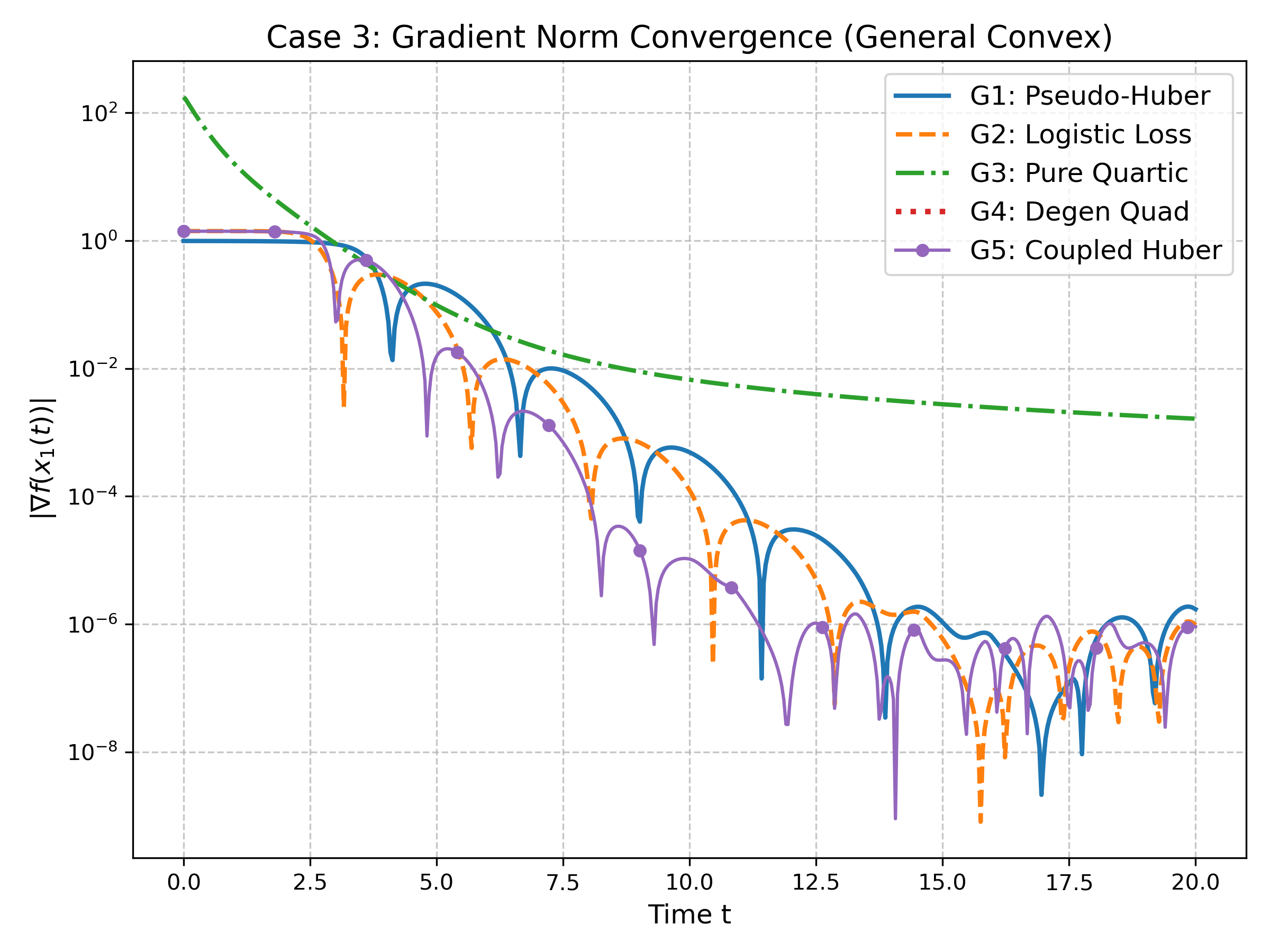}
    \caption{Evolution of the gradient norm $\|\nabla f(x_1(t))\|$ over time for five general convex and smooth objective functions (Case 3). The results demonstrate the successful asymptotic convergence to stationarity of the augmented two-layer optimal backstepping framework, even in the absence of global strong convexity or in the presence of degenerate directions.}
    \label{fig:case3}
\end{figure}

We set the control gains to $k_d = 1.0$, $k_i = 1.0$, $c = 1.0$, and $\rho = 2.0$. Initial conditions are chosen as $x_1(0) = [5, 5]^\top$, $x_2(0) = [0, 0]^\top$, and $z(0) = [0, 0]^\top$. As shown in Figure \ref{fig:case3}, the proposed two-layer optimal backstepping framework successfully drives the gradient norm to zero across all examples. Even in cases with vanishing Hessians (G3) or non-unique optima (G4), the algorithm remains stable and ensures asymptotic convergence to stationarity.

\section{Conclusion}\label{sec:conclusion}

This paper developed a backstepping-based framework for designing continuous-time unconstrained accelerated optimization algorithms. The starting point was to reformulate the algorithm-design problem as a controller-synthesis problem for the general augmented strict-feedback system \eqref{eq:augmented-strict-feedback}, with the gradient output $\nabla f(x_1)$, rather than the state $x_1$ itself, as the regulated variable. From this viewpoint, backstepping leads naturally to a two-stage design procedure: one first selects a virtual control $\alpha$ together with a first-step storage function $V_1$, and then constructs the actual input $u$ so that the full system is driven toward the manifold selected by $\alpha$ and the output $\nabla f(x_1(t))$ converges to zero.

This synthesis viewpoint is broad enough to recover existing accelerated optimization flows. In particular, the constant-parameter Nesterov flow and the PIDAO flow arise in this paper as direct theorem-level realizations obtained from different choices of virtual control and first-step dissipation structure. Beyond synthesis, the paper also established an optimal-backstepping interpretation of these constructions. It was shown that, once a virtual control $\alpha$ is fixed, the universal second-step law is inverse optimal only for the induced outer-tracking problem associated with the manifold determined by $\alpha$. The paper then formulated a genuine optimal-backstepping principle by requiring the virtual control itself to solve a prescribed reduced Hamilton--Jacobi--Bellman problem. Under this condition, the final design acquires a true two-layer optimality structure: the virtual control is optimal for the reduced problem, while the actual control is inverse optimal for the induced outer problem.

The framework also suggests several natural extensions. First, one may replace the Euclidean energy structure used here by a Bregman-type geometry, thereby connecting backstepping design with non-Euclidean accelerated flows. Second, the same viewpoint may be extended from unconstrained optimization to constrained optimization by treating optimization dynamics as backstepping-based feedback regulation laws compatible with feasibility constraints, projected dynamics, or primal--dual augmentations. Third, the framework opens a path toward online and time-varying optimization, where the objective and its minimizer evolve with time, so that the relevant goal is no longer convergence to a static stationary point, but feedback tracking of a moving optimum. These directions indicate that backstepping may provide not only a synthesis tool for unconstrained accelerated gradient flows, but also a broader control-theoretic design principle for structured optimization dynamics.

\appendices

\section*{References}
\bibliographystyle{unsrt}
\bibliography{reference} 

\vspace{-4 em}
\begin{IEEEbiography}[{\includegraphics[width=1in,height=1.25in,clip,keepaspectratio]{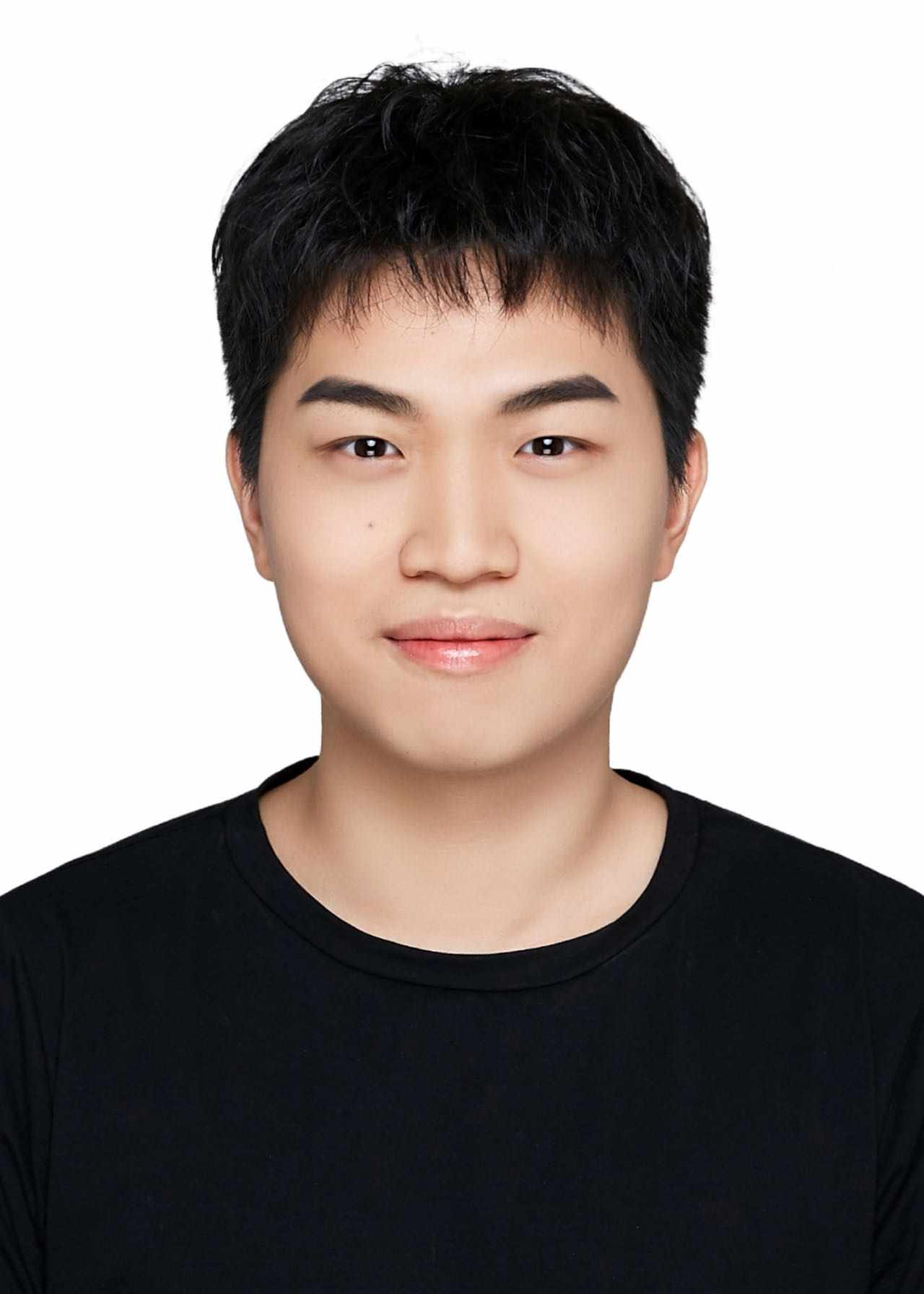}}]
{Song Chen} received  the Ph.D. degree in operational research and cybernetics from Zhejiang University, Hangzhou, China, in 2025.

He is currently a Research Fellow with the Department of Mathematics, National University of Singapore (NUS), Singapore. His research interests lie at the intersection of control theory and artificial intelligence, with a particular focus on control-oriented learning methods and embodied AI. His broader expertise includes convex optimization, nonlinear control, and machine learning theory with applications in robotics.
\end{IEEEbiography}

\vspace{-4 em}
\begin{IEEEbiography}[{\includegraphics[width=1in,height=1.25in,clip,keepaspectratio]{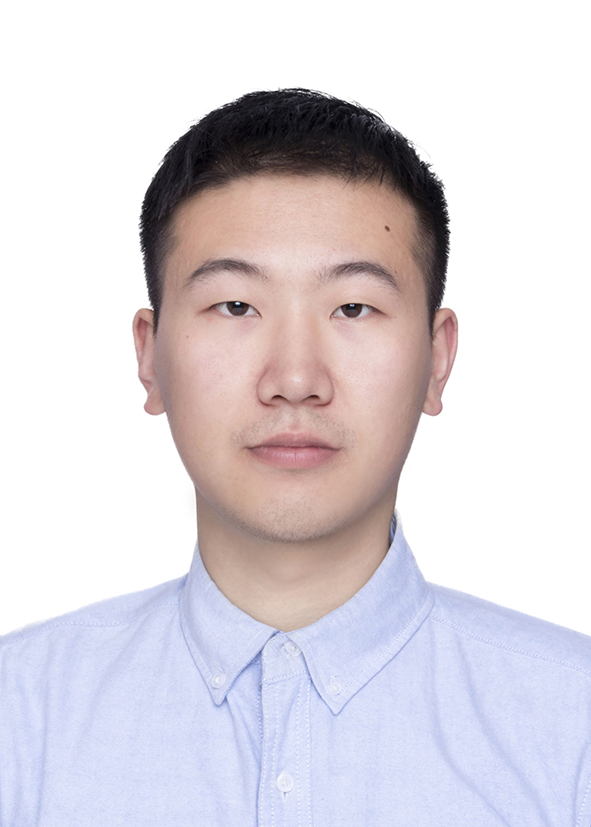}}]
{Jiaxu Liu} received the bachelor’s degree in mathematics from Renmin University of China, Beijing, China, in 2021. He is currently working toward the Ph.D. degree in operational research and cybernetics with Zhejiang University, Hangzhou, China.

His research interests include distributed optimization, convex optimization, robust control,  machine learning theory, and their applications in robotics.
\end{IEEEbiography}
\vspace{-4 em}

\begin{IEEEbiography}[{\includegraphics[width=1in,height=1.25in,clip,keepaspectratio]{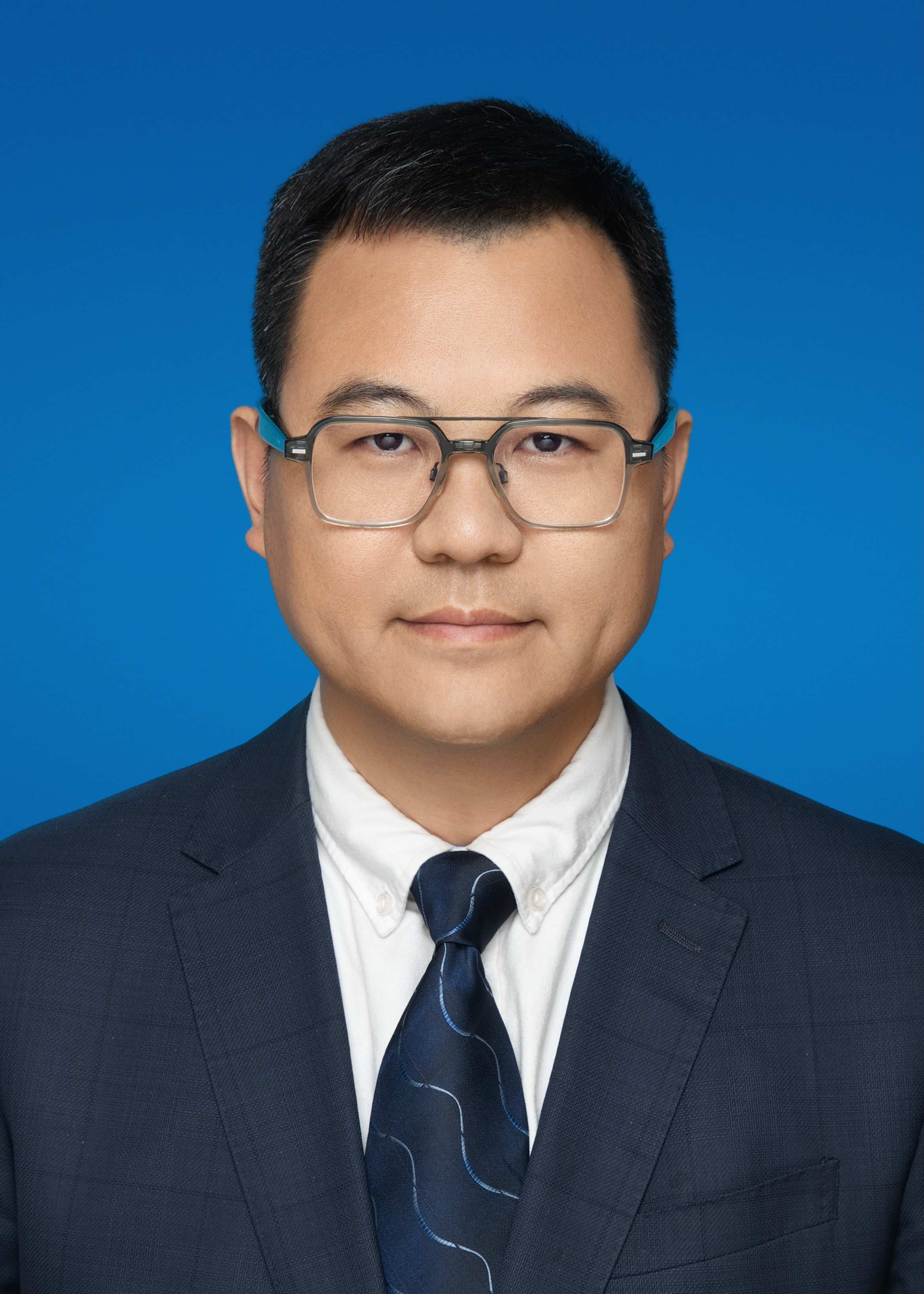}}]
{Chao Xu} (Senior Member, IEEE), received the Ph.D. degree in mechanical engineering from Lehigh University, Bethlehem, PA, USA, in 2010.

He is currently Professor of Controls and Autonomous Systems with the College of Control Science $\&$ Engineering, Zhejiang University (ZJU). He serves the inaugural Dean of ZJU Huzhou Institute, as well as plays the role of the Managing Editor for two international journals, e.g., \textit{IET Cyber-Systems and  Robotics} (IET-CSR), and \textit{Journal of Industrial and Management Optimization} (JIMO). His research expertise is Cybernetic Physics and Autonomous Mobility in general, with a focus on, modeling and  control of aerial robotics with applications, machine learning for dynamic systems and control, visual sensing and machine learning for complex fluids.
\end{IEEEbiography}

\end{document}